\def\qqed{\hfill $\blacksquare$}
\newcommand{\ave}[1]{\ensuremath{\langle#1\rangle} }
\newcommand{\QCP}{{\sc Quartet Compatibility}}
\newcommand{\Q}{\mathcal{Q}}
\newcommand{\A}{\mathcal{A}}
\newcommand{\Dis}{{\sc Displaying}}
\newtheorem{thm}{\bfseries Theorem}[section] %[section]
\newtheorem{lem}[thm]{\bfseries Lemma} %% lemmas, props, cor, etc
\newtheorem{prop}[thm]{\bfseries Proposition} %%
\newtheorem{cor}[thm]{\bfseries Corollary}
\newtheorem*{cl}{\bfseries Claim}
\theoremstyle{definition}
\newtheorem{exmp}[thm]{\bfseries Example}
\crefname{thm}{Theorem}{Theorems}
\crefname{prop}{Proposition}{Propositions}
\crefname{lem}{Lemma}{Lemmas}
\crefname{exmp}{Example}{Examples}
\crefname{cor}{Corollary}{Corollarys}
\crefname{cl}{Claim}{Claims}
\crefname{remark}{Remark}{Remarks}
\crefname{section}{Section}{Sections}
\begin{document}
	\title{Reconstructing phylogenetic trees from multipartite quartet systems\thanks{A preliminary version of this paper has appeared in the proceedings of the 29th International Symposium on Algorithms and Computation (ISAAC 2018).}}
	\author{Hiroshi Hirai\thanks{Department of Mathematical Informatics,
			Graduate School of Information Science and Technology,
			The University of Tokyo, Tokyo 113-8656, Japan.
			Email: \texttt{hirai@mist.i.u-tokyo.ac.jp}} \and Yuni Iwamasa\thanks{Department of Communications and Computer Engineering,
Graduate School of Informatics,
Kyoto University, Kyoto 606-8501, Japan.
			Email: \texttt{iwamasa@i.kyoto-u.ac.jp}}}
	\date{\today}
	\maketitle
	
\begin{abstract}
	A phylogenetic tree is a graphical representation of an evolutionary history of taxa in which the leaves correspond to the taxa and the non-leaves correspond to speciations.
	One of important problems in phylogenetic analysis is to assemble  
	a global phylogenetic tree
	from small phylogenetic trees, particularly, quartet trees.
	\QCP\ is the problem of deciding whether there is a phylogenetic tree inducing a given collection of quartet trees,
	and to construct such a phylogenetic tree if it exists.
	It is known that \QCP\ is NP-hard
	and that there are only a few results known for polynomial-time solvable subclasses.
	
	In this paper, we introduce two novel classes of quartet systems, called complete multipartite quartet system and full multipartite quartet system,
	and present polynomial-time algorithms for \QCP\ for these systems.
	%We also see that complete/full multipartite quartet systems naturally arise from a limited situation of block-restricted measurement.
\end{abstract}
\begin{quote}
	{\bf Keywords: }
	phylogenetic tree, quartet system, supertree, reconstruction
\end{quote}
	
\section{Introduction}\label{sec:intro}
A {\it phylogenetic tree} for finite set $[n] := \{1,2, \dots, n\}$ is a tree $T = (V, E)$ such that
the set of leaves of $T$ coincides with $[n]$
and each internal node $V \setminus [n]$ has at least three neighbors.
A phylogenetic tree represents an evolutionary history of taxa
in which the leaves correspond to the taxa and the non-leaves correspond to speciations.
%An important task in biology is to 
%construct (or infer) a phylogenetic tree that explains an evolutionary history well from partial information among taxa,
%or substructure of the hidden phylogenetic tree.
One of important problems in phylogenetic analysis is to assemble  
a global phylogenetic tree  on $[n]$ (called a {\em supertree})
from smaller pieces of phylogenetic trees on possibly overlapping subsets of $[n]$; see~\cite[Section~6]{book/SempleSteel03}.
%that explains an evolutionary history well from partial information among taxa,
%or substructure of the hidden phylogenetic tree.

A {\em quartet tree} (or {\it quartet}) is a smallest nontrivial phylogenetic tree, that is,
it has four leaves (as taxa) and it is not a star.
There are three quartet trees in set $\{a,b,c,d\}$, which are denoted by $ab||cd$, $ac||bd$, and $ad||bc$.
Here $ab||cd$ represents the quartet tree 
such that $a$ and $b$ ($c$ and $d$) are adjacent  to a common node;
see Figure~\ref{fig:quartet}.
\begin{figure}[htb]
	\begin{center}
		\includegraphics[width=8cm]{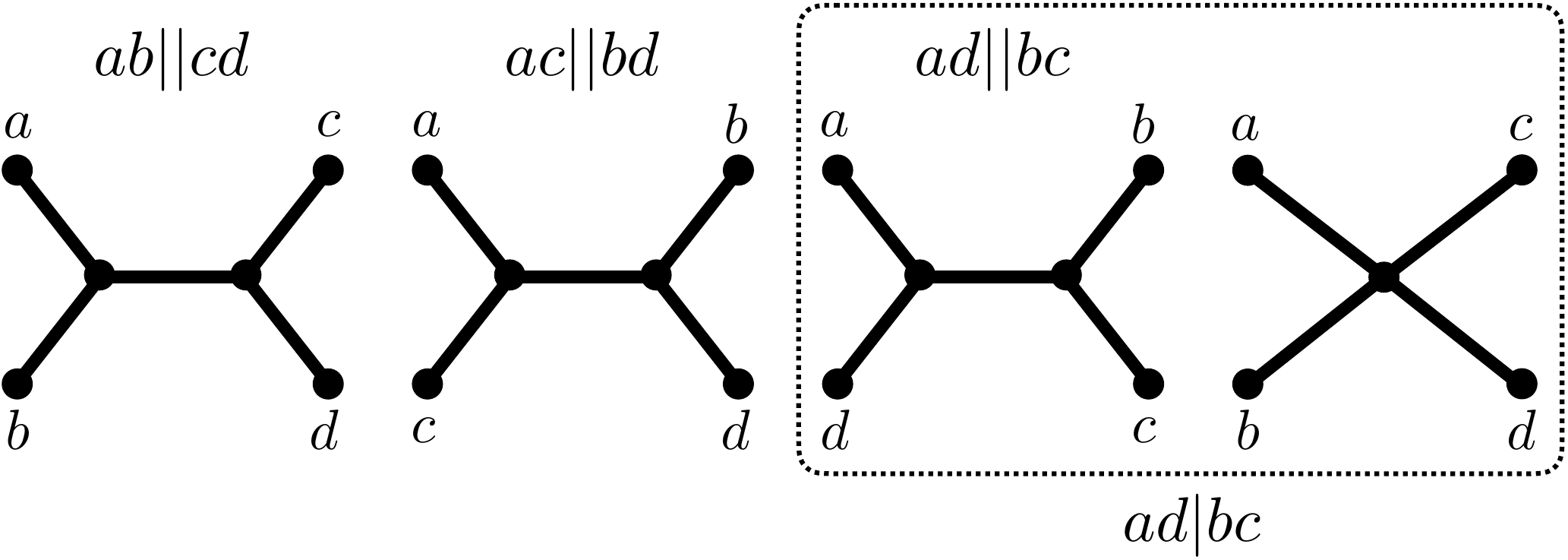}
		\caption{The quartets $ab || cd$, $ac || bd$, and $ad || bc$ represent the first, second, and third phylogenetic trees for $a,b,c,d$ from the left, respectively.
			$ad | bc$, for example, represents one of the two phylogenetic trees in the dotted curve, that is, $ad || bc$ or the star graph with leaves $a,b,c,d$.}
		\label{fig:quartet}
	\end{center}
\end{figure}
Quartet trees are used for representing 
substructures of a (possibly large) phylogenetic tree.
A fundamental problem in phylogenetic analysis is to construct,
from given quartets,
a phylogenetic tree having the quartets as substructures.
To introduce this problem formally, we use some notations and terminologies.
We say that a phylogenetic tree $T$ {\em displays} a quartet $ab||cd$ 
if the simple paths connecting $a,b$ and 
$c,d$ in $T$, respectively, do not meet, i.e., 
$ab||cd$ is the ``restriction'' of $T$ to leaves $a,b,c,d$; see Figure~\ref{fig:phylo tree}.
\begin{figure}[htb]
	\begin{center}
		\includegraphics[width=10cm]{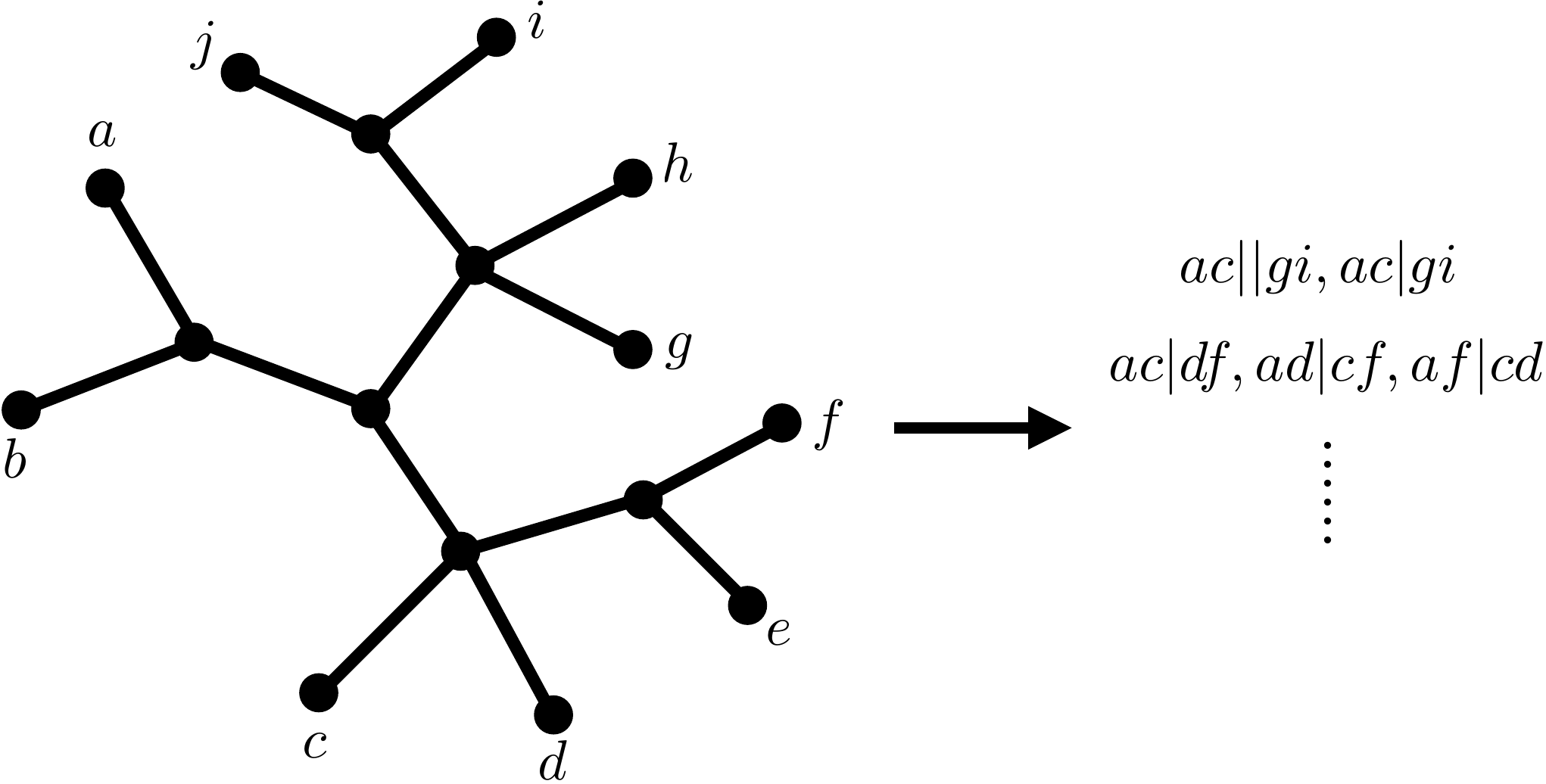}
		\caption{An example of phylogenetic tree $T$ for $\{a,b,c,d,e,f,g,h,i, j\}$.
			$T$ displays, for example, $ac || gi, ac | gi$,
			and $ac|df, ad|cf, af|cd$.
			The quartet trees $ac|df, ad|cf, af|cd$ follow from the same substructure of $T$, the star formed by the restriction of $T$ to $a, c, d, f$.
}
		\label{fig:phylo tree}
	\end{center}
\end{figure}
By a {\em quartet system} on $[n]$ 
we mean a collection of quartet trees whose leaves are subsets of $[n]$.
We say that $T$ {\it displays} a quartet system $\mathcal{Q}$
if $T$ displays all quartet trees in $\mathcal{Q}$.
A quartet system $\mathcal{Q}$ is said to be {\it compatible}
if there exists a phylogenetic tree displaying $\mathcal{Q}$.
Now the problem is formulated as:
\begin{description}
	\item[\underline{\QCP}]
	\item[Given:] A quartet system $\mathcal{Q}$.
	\item[Problem:]
	Determine whether $\mathcal{Q}$ is compatible or not.
	If $\mathcal{Q}$ is compatible, obtain a phylogenetic tree $T$ displaying $\mathcal{Q}$.
\end{description}

\QCP\ has been intensively studied in computational biology as well as theoretical computer science,
particularly, algorithm design and computational complexity.
After a fundamental result by Steel~\cite{JC/S92} on the NP-hardness of \QCP,
there have been a large amount of algorithmic results,
which include efficient heuristics~\cite{AAM/BS95,Master/D86,CAI/ESSW97,PLoSONE/RBR14,JMBE/SH96}
approximation algorithms~\cite{SODA/BBJKLWZ00, ESA/BJKLW99,SICOMP/JKL01},
and parametrized algorithms~\cite{TCS/CLR10,JCSS/GN03}.

In contrast,
there are only a few results on polynomial-time solvable special subclasses:
%The complexity of \QCP,
%which is a fundamental and important issue in computational biology,
%was settled by Steel~\cite{JC/S92}; \QCP\ is NP-hard.
%Following Steel's result, the importance of \QCP\ motivates us to devise
%polynomial time algorithms finding approximate solutions,
%efficient non-polynomial time algorithms,
%and polynomial time algorithms for special classes of quartet systems.
%From the first and second motivations,
%there are a number of studies on heuristics~\cite{PLoSONE/RBR14,JMBE/SH96}, approximation algorithms~\cite{SODA/BBJKLWZ00, ESA/BJKLW99,SICOMP/JKL01}, and parametrized algorithms~\cite{TCS/CLR10,JCSS/GN03} for optimization versions of \QCP\ (see also references therein).
%From the third motivation,
%there also exist many studies on a similar problem to \QCP named as {\sc Character Compatibility}~\cite{JMB/EM77,SICOMP/KW94} (see also~\cite[Section~4]{book/SempleSteel03} and \cite[Section~3]{JC/S92}).
%In contrast,
%there are only a few studies for the third motivation,
%although such study is useful to design experiment methods for obtaining quartet systems.
%To the best of our knowledge,
%the known results are:
\begin{itemize}
	\item Colonius and Schulze~\cite{BJMSP/CS81} established a complete characterization to the abstract quaternary relation $N$
	({\em neighbors relation}) obtained from a phylogenetic tree $T$ by: 
	$N(a,b,c,d)$ holds if and only if $T$ displays quartet tree $ab||cd$.
	By using this result, Bandelt and Dress~\cite{AAM/BD86} showed that
	if, for every $4$-element set $\{a,b,c,d\}$ of $[n]$,
	exactly one of $ab||cd$, $ac||bd$, and $ad||cd$ belongs to ${\cal Q}$, 
	then \QCP\ for ${\cal Q}$ can be solved in polynomial time. 
	%We will call such a quartet system {\em full}; see below.  
	\item Aho, Sagiv, Szymanski, and Ullman~\cite{SICOMP/ASSU81} devised a polynomial-time algorithm to
	find a {\it rooted} phylogenetic tree displaying the input {\it triple} system.
	By using this result,
	Bryant and Steel~\cite{AAM/BS95} showed that, if all quartets in $\mathcal{Q}$ have a common label,
	then \QCP\ for $\mathcal{Q}$ can be solved in polynomial time.
\end{itemize}
Such results are useful for designing experiments to obtain quartet information from taxa,
and also play key roles in developing supertree methods for (incompatible) phylogenetic trees (e.g.,~\cite{DAM/SS00}).
%Such results are useful for experimental design as well as algorithm design (see e.g.,~\cite{}).
%

In this paper,
we present two novel tractable classes of quartet systems.
To describe our result, we extend the notions of 
quartets and quartet systems.
In addition to $ab||cd$, 
we consider symbol $ab|cd$ as a quartet, 
which represents 
the quartet tree $ab||cd$ or the star with leaves $a,b,c,d$; see Figure~\ref{fig:quartet}.
This corresponds to the {\em weak neighbors relation} in~\cite{AAM/BD86,BJMSP/CS81}, and enables us to 
capture a degenerate phylogenetic tree in 
which internal nodes may have degree greater than $3$.
In a sense,  $ab|cd$ means a ``possibly degenerate'' quartet tree such that 
the center edge can have zero length,
where edge lengths represent evolutionary distances.
We define that a phylogenetic tree $T$ {\em displays} 
$ab|cd$ if the simple paths connecting $a,b$ and $c,d$ in $T$, 
respectively,  meet at most one node, i.e., 
the restriction of $T$ to $a,b,c,d$ is $ab||cd$ or the star;
see Figure~\ref{fig:phylo tree}.
Then the concepts of quartet systems, 
displaying, compatibility, and \QCP\ are naturally extended.
A quartet system $\mathcal{Q}$ is said to be {\it full} on $[n]$
if, for each distinct $a, b, c, d \in [n]$,
either one of  $ab || cd, ac || bd, ad || bc$ belongs to $\mathcal{Q}$ 
or all $ab | cd, ac | bd, ad | bc$ belong to $\mathcal{Q}$.
The latter situation says that 
any phylogenetic tree displaying $\mathcal{Q}$ should 
induce the star on $a,b,c,d$. 
Actually the above polynomial-time algorithm by Bandelt and Dress~\cite{AAM/BD86} works for full quartet systems.
% \begin{figure}[htb]
% 	\begin{center}
% 		\includegraphics[width=8cm]{quartet.pdf}
% 		\caption{The quartets $ab || cd$, $ac || bd$, and $ad || bc$ represent the first, second, and third phylogenetic trees for $a,b,c,d$ from the left, respectively.
% 			$ad | bc$, for example, represents one of the two phylogenetic trees in the dotted curve, that is, $ad || bc$ or the star graph with leaves $a,b,c,d$.}
% 		\label{fig:quartet}
% 	\end{center}
% \end{figure}
% \begin{figure}[htb]
% 	\begin{center}
% 		\includegraphics[width=10cm]{phylo_tree.pdf}
% 		\caption{An example of phylogenetic tree $T$ for $\{a,b,c,d,e,f,g,h,i, j\}$.
% 			$T$ displays, for example, $ac || gi, ac | gi$,
% 			and $ac|df, ad|cf, af|cd$.}
% 		\label{fig:phylo tree}
% 	\end{center}
% \end{figure}
% Full quartet systems may be viewed as a counterpart of complete graphs.
% We introduce multipartite counterparts for quartet systems.
A quartet system $\mathcal{Q}$ is said to be {\it complete bipartite} relative to bipartition $\{ A,  B\}$ of $[n]$ with $\min \{ |A|,|B| \} \geq 2$
if, for all distinct $a,a' \in A$ and $b,b' \in B$,
exactly one of 
\begin{align}\label{eq:complete bipartite}
ab || a'b', \quad ab' || a'b, \quad aa' | bb'
\end{align}
belongs to $\mathcal{Q}$,
and every quartet in $\mathcal{Q}$ is of the above form~\eqref{eq:complete bipartite}.
Note that every phylogenetic tree displays
exactly one of three quartets in~\eqref{eq:complete bipartite}.

We are ready to introduce the two new classes of quartet systems.
% We next introduce a complete/full multipartite system.
Let $\mathcal{A} := \{A_1, A_2, \dots, A_r\}$ be a partition of $[n]$ with $|A_i| \geq 2$ for all $i \in [r]$.
A quartet system $\mathcal{Q}$ is said to be {\it complete multipartite} relative to $\mathcal{A}$ or {\it complete $\mathcal{A}$-partite}
if $\mathcal{Q}$ is represented as $\bigcup_{1 \leq i < j \leq r} \mathcal{Q}_{ij}$
for complete bipartite quartet systems $\mathcal{Q}_{ij}$ on $A_{i} \cup A_{j}$ 
with bipartition $\{A_i, A_j\}$. 
A quartet system $\mathcal{Q}$ is said to be {\it full multipartite} relative to $\mathcal{A}$ or {\it full $\mathcal{A}$-partite}
if $\mathcal{Q}$ is represented as $\mathcal{Q}_0 \cup \mathcal{Q}_1 \cup \cdots \cup \mathcal{Q}_r$,
where $\mathcal{Q}_0$ is a complete $\mathcal{A}$-partite quartet system and
$\mathcal{Q}_i$ is a full quartet system on $A_i$ for each $i \in [r]$.
%A quartet system $\mathcal{Q}$ is said to be a {\it complete multipartite} (resp. a {\it full multipartite})
%if $\mathcal{Q}$ is complete (resp. full) $\mathcal{A}$-partite for some $\mathcal{A}$.
%
Our main result is:
\begin{thm}\label{thm:main}
	If the input quartet system $\mathcal{Q}$ is complete $\mathcal{A}$-partite 
	or full $\mathcal{A}$-partite,
	then
	\QCP\ can be solved in $O(|\mathcal{A}|n^4)$ time.
\end{thm}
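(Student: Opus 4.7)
The plan is to reduce the full $\mathcal{A}$-partite case to the complete $\mathcal{A}$-partite case, and to prove the complete $\mathcal{A}$-partite case by first solving the bipartite subcase ($r = 2$) and then building up to general $r$.

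For the full $\mathcal{A}$-partite decomposition $\mathcal{Q} = \mathcal{Q}_0 \cup \mathcal{Q}_1 \cup \cdots \cup \mathcal{Q}_r$, each $\mathcal{Q}_i$ is a full quartet system on $A_i$, so the Bandelt--Dress algorithm either detects incompatibility of $\mathcal{Q}_i$ or produces a phylogenetic tree $T_i$ on $A_i$ in polynomial time. Any tree $T$ displaying $\mathcal{Q}$ must restrict to $T_i$ on $A_i$, so it suffices to solve the complete $\mathcal{A}$-partite problem for $\mathcal{Q}_0$, obtaining a block-level skeleton, and then replace each super-leaf representing $A_i$ with $T_i$ while verifying consistency with the cross-block quartets.

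For the complete $\mathcal{A}$-partite case, the bipartite subcase is the combinatorial heart. For $r=2$ I would prove a lemma of the form: $\mathcal{Q}_{AB}$ is compatible if and only if every $5$-element subset of $A \cup B$ with a $2{+}3$ split across $\{A, B\}$ is locally consistent, i.e., the quartets in $\mathcal{Q}_{AB}$ on that subset are jointly realizable as the restriction of some tree to those five elements; moreover a witnessing tree can then be constructed in $O(|A|^2 |B|^2)$ time, for example by fixing a pair of pivot leaves, classifying each remaining element by its quartet patterns against the pivots, and recursively refining. For general $r \geq 3$, I would then build a quotient tree on $\mathcal{A}$ (each $A_i$ contracted to a super-leaf) by applying a Bandelt--Dress-style reconstruction to the induced full quartet system on a small set of representatives (say, two per block) together with coherence data distilled from the $\mathcal{Q}_{ij}$, and then blow up each super-leaf according to the bipartite reconstructions. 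Processing the $r$ super-leaves in turn contributes the $|\mathcal{A}|$ factor in the $O(|\mathcal{A}| n^4)$ running time.

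The hardest step will be the bipartite lemma. The obstacle is that $\mathcal{Q}_{AB}$ offers no information about $4$-subsets with a $4{+}0$, $3{+}1$, or $1{+}3$ split, so the tree on $A \cup B$ is substantially underdetermined within each side; we must identify a canonical minimally-resolved tree (e.g.\ a caterpillar-like refinement within each side) that is guaranteed to exist whenever local consistency holds, and show that local $5$-element consistency propagates to global compatibility. Extending from the bipartite to the multipartite case introduces a secondary coherence obstacle: the slot in the quotient tree into which each $A_i$ is inserted must agree across all pairs $(i,j)$, $(i,k)$, so the pairwise bipartite reconstructions must be shown to fit together, which I expect again to reduce to a bounded number of local checks on $O(1)$-sized subfamilies of $\mathcal{Q}$.
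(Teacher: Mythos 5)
Your plan founders on a structural misconception: you treat each block $A_i$ as a prospective ``super-leaf'' of a quotient tree on $\mathcal{A}$, to be blown up after a block-level skeleton has been found. But in a compatible complete (or full) $\mathcal{A}$-partite system the blocks need not be contiguous in the displaying tree. In \cref{ex:reconstruction}, with $A_1=\{a,b,c\}$ and $A_3=\{f,g\}$, the laminar family contains $\{a,g\}$, so the corresponding edge separates $a$ from $b,c$ while grouping $a$ with $g$; there is no tree in which $A_1$ hangs off a single edge. Consequently there is no quotient tree on $\mathcal{A}$ to reconstruct from two representatives per block, and the step ``replace each super-leaf representing $A_i$ with $T_i$'' in your reduction of the full case has no target to act on. Deciding how the blocks interleave is precisely the hard part of the problem, and the paper handles it by working with $\mathcal{A}$-cut families rather than trees: it incrementally extends a cut family displaying $\mathcal{Q}_{[t-1]}$ to one displaying $\mathcal{Q}_{[t]}$ (Algorithm~3), with correctness resting on a uniqueness statement for minimal laminarizable families (\cref{prop:r>=3 unique}) and a non-laminarizability certificate (\cref{lem:nonlaminar}).

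There are two further gaps. First, your bipartite lemma --- that local consistency of all $2{+}3$ five-element subsets implies compatibility --- is asserted, not proved, and it is not what the paper establishes; the paper instead builds the laminarizable cut family directly by the pivot recursion you sketch and then simply verifies at the end that it displays $\mathcal{Q}$, with correctness guaranteed by uniqueness up to the equivalence $\sim$ (\cref{prop:r=2 unique}, Algorithms~1 and~2). Your pivot-classification idea for $r=2$ is essentially Algorithm~1, so that part is salvageable. Second, your proposal never confronts the fact that a family displaying $\mathcal{Q}$ produced by such local constructions need not be laminar (again see $\mathcal{F}=\{\{a,b,d,g\},\{a,g\},\{d,i\},\{g,h\}\}$ in \cref{ex:reconstruction}); converting it into an equivalent laminar family is a separate nontrivial problem, {\sc Laminarization}, which the paper outsources to an $O(n^4)$ algorithm of \cite{arxiv/HIMZ18} (\cref{thm:laminarization}). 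Without that ingredient, or a proof that your construction directly yields a laminar family, the argument cannot be completed.
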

The result for full $\mathcal{A}$-partite quartet systems 
extends the above polynomial-time solvability for full quartet systems by~\cite{AAM/BD86}.
Also this result may have an insight on supertree construction from
phylogenetic trees on disjoint groups of taxa.
Namely, the existence of a supertree of these phylogenetic trees
can be decided in polynomial time if the interrelation among these groups
are described as a multipartite quartet system.
% Another possible application is given as follows.

\paragraph{Organization.}
This paper is organized as follows.
\QCP\ can be viewed as a problem of finding an appropriate laminar family.
We first introduce a displaying concept for an arbitrary family of subsets,
and then divide \QCP\ into two subproblems.
The first problem is to find a family displaying the input quartet system,
and the second problem is to transform the family into a desired laminar family.
For the second,
we utilize the {\it laminarization algorithm}
developed by Hirai, Iwamasa, Murota, and \v{Z}ivn\'y~\cite{TALG/HIMZ19}
obtained in a different context.
%for a completely irrelevant problem
%in discrete optimization.
In \cref{sec:complete,sec:full},
we show the results for complete and full multipartite quartet systems,
respectively.
In \cref{sec:remark},
we suggest a possible situation in which our results are applicable.

\paragraph{Preliminaries.}
A family $\mathcal{L} \subseteq 2^{[n]}$ is said to be {\it laminar}
if $X \subseteq Y$, $X \supseteq Y$, or $X \cap Y = \emptyset$ holds for all $X, Y \in \mathcal{L}$.
A phylogenetic tree can be encoded into a laminar family as follows.
Let $T = (V, E)$ be a phylogenetic tree for $[n]$.
We say that an edge in $E$ is {\it internal}
if it is not incident to a leaf.
By deleting an internal edge $e \in E$,
the tree $T$ is separated into two connected components,
which induce a bipartition of $[n]$.
We denote by $\{X_e, Y_e\}$ this bipartition.
By choosing either $X_e$ or $Y_e$ appropriately for each internal edge $e \in E$,
we can construct a laminar family $\mathcal{L}$ on $[n]$ with $\min\{|X|, |[n] \setminus X|\} \geq 2$ for all $X \in\mathcal{L}$.
Conversely, let $\mathcal{L}$ on $[n]$ be a laminar family with $\min\{|X|, |[n] \setminus X|\} \geq 2$ for all $X \in\mathcal{L}$.
Then we construct the set $\hat{\mathcal{L}} := \{ \{ X, [n] \setminus X \} \mid X \in \mathcal{L} \}$ of bipartitions from $\mathcal{L}$.
It is known~\cite{MAHS/B71} that, for such $\hat{\mathcal{L}}$, there uniquely exists a phylogenetic tree that induces $\hat{\mathcal{L}}$.

\section{Complete multipartite quartet system}\label{sec:complete}
\subsection{{\sc Displaying} and {\sc Laminarization}}\label{subsec:complete preliminary}
In this subsection,
we explain that \QCP\ for complete multipartite quartet systems can be divided into two subproblems named as {\sc Displaying} and {\sc Laminarization}.
Let $\mathcal{A} := \{A_1, A_2, \dots, A_r\}$ be a partition of $[n]$ with $|A_i| \geq 2$ for all $i \in [r]$,
and $\mathcal{Q}$ be a complete $\mathcal{A}$-partite quartet system.
We say that a family $\mathcal{F} \subseteq 2^{[n]}$ {\it displays} $\mathcal{Q}$
if, for all distinct $i, j \in [r]$, $a, a' \in A_i$, and $b, b' \in A_j$,
the following (i) and (ii) are equivalent:
\begin{itemize}
	\item[(i)] $ab || a'b'$ belongs to $\mathcal{Q}$.
	\item[(ii)] There is $X \in \mathcal{F}$ satisfying $a, b \in X \not\ni a', b'$ or $a, b \not\in X \ni a', b'$.
\end{itemize}
Every family displays either exactly one complete $\mathcal{A}$-partite quartet system or no complete $\mathcal{A}$-partite quartet system.
Indeed,
a family $\mathcal{F}$ uniquely determine the set $\mathcal{Q}(\mathcal{F})$ of quartet trees of the form $ab || a'b'$
by the above correspondence.
The family $\mathcal{F}$ does not display any complete $\mathcal{A}$-partite quartet system
if and only if
$\mathcal{Q}(\mathcal{F})$ contains both $ab || a'b'$ and $ab' || a'b$
for some distinct $i, j \in [r]$, $a, a' \in A_i$, and $b, b' \in A_j$.
In particular, a laminar family $\mathcal{L}$ always displays exactly one complete $\mathcal{A}$-partite quartet system $\mathcal{Q}$.

We can easily see that a complete $\mathcal{A}$-partite system $\mathcal{Q}$ is compatible if and only if there exists a laminar family $\mathcal{L}$ displaying $\mathcal{Q}$,
and that in the compatible case,
$\mathcal{Q}$ is displayed by
the phylogenetic tree corresponding to $\mathcal{L}$.
Hence \QCP\ for a complete $\mathcal{A}$-partite quartet system $\mathcal{Q}$ can be viewed as the problem of finding a laminar family $\mathcal{L}$ displaying $\mathcal{Q}$ if it exists.

It can happen that different families may display the same complete $\mathcal{A}$-partite quartet system.
% Indeed,
% for a set $X \subseteq [n]$,
% if there is at most one index $i \in [r]$ such that $\emptyset \neq X \cap A_i \neq A_i$,
% then $\{X\}$ and $\{\emptyset\}$ display the same complete $\mathcal{A}$-partite quartet system $\mathcal{Q}$; no $ab || a'b'$ belongs to $\mathcal{Q}$
% for distinct $i, j \in [r]$, $a, a' \in A_i$, and $b, b' \in A_j$.
% Again suppose that $X \subseteq [n]$ has at least two indices $i, j \in [r]$ such that $\emptyset \neq X \cap A_i \neq A_i$ and $\emptyset \neq X \cap A_j \neq A_j$.
% Then 
To cope with such complications,
we define notions on subsets of $[n]$.
For $X \subseteq [n]$,
define
\begin{align}\label{eq:ave}
\ave{X} := \bigcup \{A_i \in \mathcal{A} \mid \emptyset \neq X \cap A_i \neq A_i \}.
\end{align}
A set $X \subseteq [n]$ is called an {\it $\mathcal{A}$-cut}
if $\ave{X} \supseteq A_i \cup A_j$ for some distinct $i,j \in [r]$,
i.e., $\emptyset \neq  X \cap A_i \neq A_i$ holds for at least two $i \in [r]$.
We define an equivalence relation $\sim$ on sets $X,Y \subseteq [n]$ by:
\begin{align}\label{eq:sim}
X \sim Y \iff \{ \ave{X} \cap X, \ave{X} \setminus X \} = \{ \ave{Y} \cap Y, \ave{Y} \setminus Y \}.
\end{align}
By easy observations,
one can see that $X$ is an $\mathcal{A}$-cut
if and only if $\{X\}$ and $\{\emptyset\}$ display different complete $\mathcal{A}$-partite quartet systems,
and
that for $\mathcal{A}$-cuts $X$ and $Y$, $X \sim Y$
if and only if $\{X\}$ and $\{ Y \}$ display the same complete $\mathcal{A}$-partite quartet system.
We consider only $\mathcal{A}$-cuts
if the input quartet system $\mathcal{Q}$ is complete $\mathcal{A}$-partite.
Indeed, let $\mathcal{F}$ be a family and $\mathcal{F}'$ the $\mathcal{A}$-cut family in $\mathcal{F}$.
Then both $\mathcal{F}$ and $\mathcal{F}'$ display the same complete $\mathcal{A}$-partite quartet system.
Let $[X] := \{ Y \subseteq [n] \mid X \sim Y \}$ for $X \subseteq [n]$.
The equivalence relation is naturally extended to families $\mathcal{F}, \mathcal{G}$ by:
$\mathcal{F} \sim \mathcal{G} \Leftrightarrow \mathcal{F} / {\sim} = \mathcal{G} / {\sim}$,
where $\mathcal{F} / {\sim} := \{ [X] \mid X \in \mathcal{F} \}$.
We can observe that, for $\mathcal{A}$-cut families $\mathcal{F}$ and $\mathcal{G}$,
if $\mathcal{F} \sim \mathcal{G}$
then both $\mathcal{F}$ and $\mathcal{G}$ display the same complete $\mathcal{A}$-partite quartet system.
A family $\mathcal{F}$ is said to be {\it laminarizable}
if there is a laminar family $\mathcal{L}$ with $\mathcal{F} \sim \mathcal{L}$.

By the above arguments,
\QCP\ for a complete $\mathcal{A}$-partite quartet system $\mathcal{Q}$ can be divided into the following two subproblems:
%\begin{itemize}
%	\item[(i)] if $\mathcal{Q}$ is compatible,
%	then find a laminarizable family $\mathcal{F}$ displaying $\mathcal{Q}$, and
%	\item[(ii)] if $\mathcal{F}$ is laminarizable,
%	then find a laminar family $\mathcal{L}$ with $\mathcal{L} \sim \mathcal{F}$.
%\end{itemize}
%%(i) if $\mathcal{Q}$ is compatible,
%%then find a laminarizable family $\mathcal{F}$ displaying $\mathcal{Q}$,
%%and (ii) if $\mathcal{F}$ is laminarizable,
%%then find a laminar family $\mathcal{L}$ with $\mathcal{L} \sim \mathcal{F}$.
%(i) and (ii) can be formulated as {\sc Displaying}
%and {\sc Laminarization}, respectively.
\begin{description}
	\item[\underline{\sc Displaying}]
	\item[Given:] A complete $\mathcal{A}$-partite quartet system $\mathcal{Q}$.
	\item[Problem:] Either detect the incompatibility of $\mathcal{Q}$,
	or obtain some $\mathcal{A}$-cut family $\mathcal{F}$ displaying $\mathcal{Q}$.
	In addition, if $\mathcal{Q}$ is compatible,
	then $\mathcal{F}$ should be laminarizable.
	\item[\underline{\sc Laminarization}]
	\item[Given:] An $\mathcal{A}$-cut family $\mathcal{F}$.
	\item[Problem:] Determine whether
	$\mathcal{F}$ is laminarizable or not.
	If $\mathcal{F}$ is laminarizable,
	obtain a laminar $\mathcal{A}$-cut family $\mathcal{L}$ with 
	$\mathcal{L} \sim \mathcal{F}$.
\end{description}
Here, in {\sc Laminarization}, we assume that no distinct $X, Y$ with $X \sim Y$ are contained in $\mathcal{F}$,
i.e., $|\mathcal{F}| = |\mathcal{F} / {\sim}|$.

\QCP\ for complete multipartite quartet systems can be solved as follows.
\begin{itemize}
	\item Suppose that $\mathcal{Q}$ is compatible.
	First, by solving {\sc Displaying}, we obtain a laminarizable $\mathcal{A}$-cut family $\mathcal{F}$ displaying $\mathcal{Q}$.
	Then, by solving {\sc Laminarization} for $\mathcal{F}$, we obtain a laminar $\mathcal{A}$-cut family $\mathcal{L}$ with $\mathcal{L} \sim \mathcal{F}$.
	Since $\mathcal{L} \sim \mathcal{F}$,
	$\mathcal{L}$ also displays $\mathcal{Q}$.
	\item Suppose that $\mathcal{Q}$ is not compatible.
	By solving {\sc Displaying},
	we can detect the incompatibility of $\mathcal{Q}$ or we obtain some $\mathcal{A}$-cut family $\mathcal{F}$ displaying $\mathcal{Q}$.
	In the former case, we are done.
	In the latter case,
	by solving {\sc Laminarization} for $\mathcal{F}$,
	we can detect the non-laminarizability of $\mathcal{F}$,
	which implies the incompatibility of $\mathcal{Q}$.
\end{itemize}

\begin{exmp}\label{ex:reconstruction}
	Let $\mathcal{A} := \{ \{a,b,c\}, \{d,e\}, \{f,g\}, \{h,i\} \}$.
	We illustrate how to solve {\sc Quartet Compatibility} for a complete $\mathcal{A}$-partite quartet system $\mathcal{Q} := \bigcup_{1 \leq i < j \leq 4} \mathcal{Q}_{ij}$,
	where
	\begin{alignat*}{2}
	\mathcal{Q}_{12} &:= \{ ab | de, ad || ce, bd || ce \}, & \qquad \mathcal{Q}_{13} &:= \{ ag || bf, ag || cf, bg || cf \},\\ \mathcal{Q}_{14} &:= \{ ab | hi, ac | hi, bc | hi \}, & \mathcal{Q}_{23} &:= \{ dg || ef \},\\
	\mathcal{Q}_{24} &:= \{ di || eh \}, & \mathcal{Q}_{34} &:= \{ fi || gh \};
	\end{alignat*}
	see also Figure~\ref{fig:example}.
	
	We first solve {\sc Displaying} for $\mathcal{Q}$,
	and obtain an $\mathcal{A}$-cut family $\mathcal{F} := \{ \{a,b,d,g\}, \{a,g\}, \{d,i\}, \{g,h\} \}$
	that displays $\mathcal{Q}$.
	Note that $\mathcal{F}$ is not laminar.
	Then, by solving {\sc Laminarization} for $\mathcal{F}$,
	we obtain a laminar $\mathcal{A}$-cut family $\mathcal{L} := \{ \{a,b,d,g\}, \{a,g\}, \{ a,b,c,d,f,g,i \}, \{f,i\} \}$ with $\mathcal{L} \sim \mathcal{F}$.
	Indeed, we have $\{ d,i \} \sim\{ a,b,c,d,f,g,i \}$ and $\{g,h\} \sim \{e,i\}$.
	This implies that $\mathcal{Q}$ is compatible
	and is displayed by the phylogenetic tree corresponding to $\mathcal{L}$.
	\qqed
\end{exmp}
\begin{figure}[p]
	\begin{center}
		\includegraphics[width=15cm]{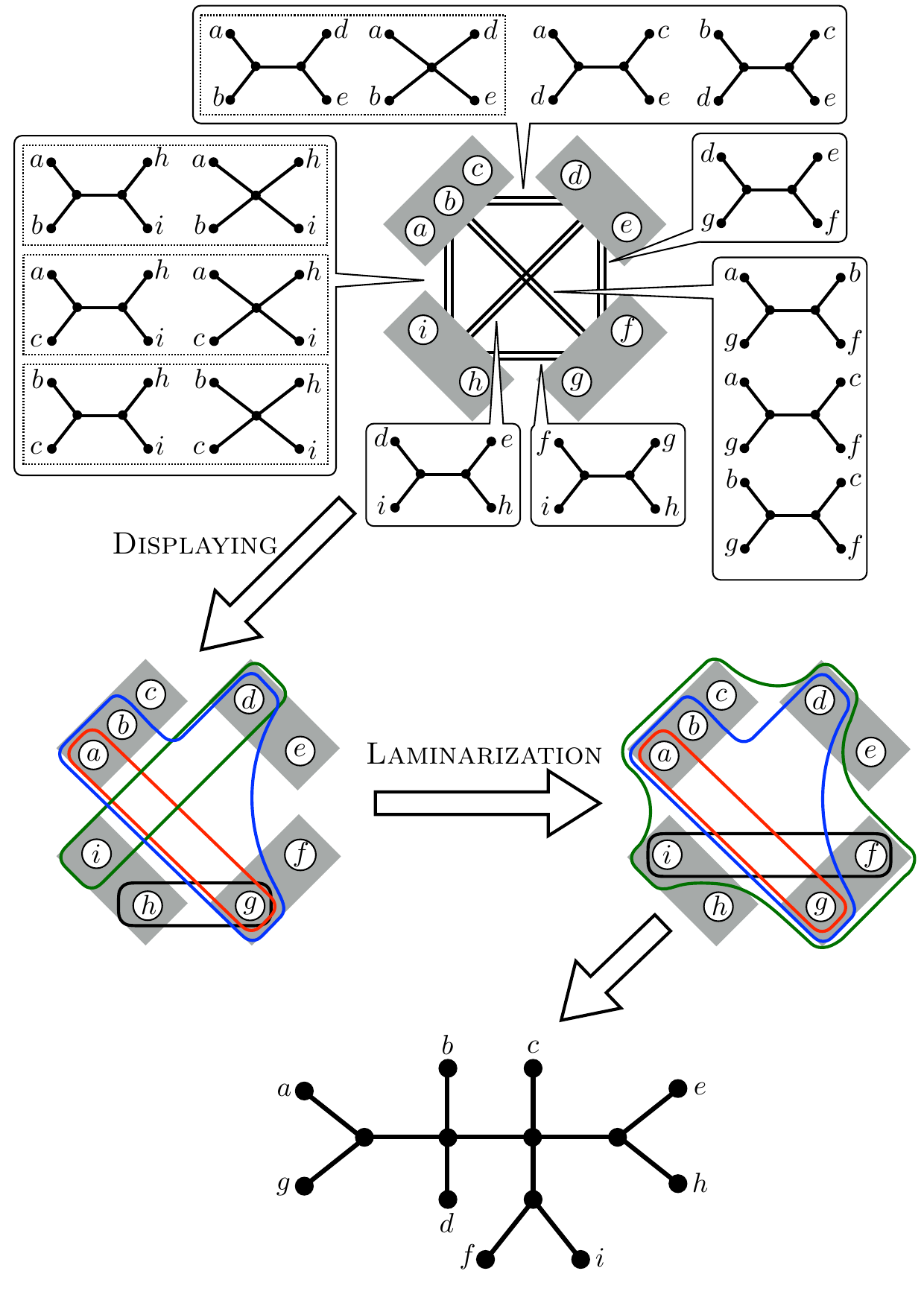}
		\caption{The outline of our algorithm for reconstructing a phylogenetic tree from a complete $\mathcal{A}$-partite system $\mathcal{Q}$ defined in \cref{ex:reconstruction}.}
		\label{fig:example}
	\end{center}
\end{figure}

In~\cite{TALG/HIMZ19},
the authors presented an $O(n^4)$-time algorithm for {\sc Laminarization}.
\begin{thm}[{\cite{TALG/HIMZ19}}]\label{thm:laminarization}
	{\sc Laminarization} can be solved in $O(n^4)$ time.
\end{thm}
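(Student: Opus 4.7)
The plan is to make the equivalence $\sim$ combinatorially explicit, cast {\sc Laminarization} as a constrained orientation problem, and then invoke the general laminarization algorithm of~\cite{arxiv/HIMZ18}.

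First, I would characterize the class $[X]$ of an $\mathcal{A}$-cut $X$. Writing $I(X) := \{i \in [r] : \emptyset \neq X \cap A_i \neq A_i\}$, a direct check shows that $Y \in [X]$ if and only if $I(Y) = I(X)$; the unordered bipartition $\{Y \cap A_i, A_i \setminus Y\}$ equals $\{X \cap A_i, A_i \setminus X\}$ for every $i \in I(X)$, with a single global sign choice (either $Y \cap A_i = X \cap A_i$ for all such $i$, or $Y \cap A_i = A_i \setminus X$ for all such $i$); and for each $i \notin I(X)$, $Y \cap A_i$ is independently chosen in $\{\emptyset, A_i\}$. The worked example in the excerpt illustrates both degrees of freedom: $\{g,h\} \sim \{f,i\}$ arises from a global sign flip on $A_3 \cup A_4$, while $\{d,i\} \sim \{a,b,c,d,f,g,i\}$ arises from toggling the constant-side choices on $A_1, A_3$.

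With this description in hand, {\sc Laminarization} becomes the problem of choosing a sign $\sigma_X \in \{\pm 1\}$ and a constant-side vector $(b_X^i)_{i \notin I(X)}$ for every $X \in \mathcal{F}$ so that the resulting family $\{\tilde{X}\}_{X \in \mathcal{F}}$ is laminar. Since laminarity is a pairwise condition (one of $\tilde{X} \cap \tilde{Y}$, $\tilde{X} \setminus \tilde{Y}$, $\tilde{Y} \setminus \tilde{X}$ must be empty), the constraints decompose pair by pair: on the shared indices $I(X) \cap I(Y)$ the choice is tight, forcing a relation between $\sigma_X$ and $\sigma_Y$; on indices where exactly one of $X, Y$ is nontrivial, the corresponding $b$-variable provides slack; and on indices where neither is nontrivial, the choice is fully free.

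The main obstacle is to show that pairwise feasibility already guarantees global laminarizability, and then to execute the sign propagation and slack selection efficiently. By analogy with the classical Buneman theorem, under which pairwise-compatible splits determine a unique phylogenetic tree, one expects a Helly-type property here as well. Rather than re-deriving those details, I would invoke the $O(n^4)$-time laminarization algorithm of~\cite{arxiv/HIMZ18}, which was obtained in a more abstract setting and performs exactly this coordinated orientation; once the above structural description of $[X]$ is in place, that algorithm applies directly and delivers the stated running time.
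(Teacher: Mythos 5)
The paper itself gives no proof of this theorem: it is imported wholesale from the cited reference \cite{arxiv/HIMZ18}, and your proposal ultimately does the same, so in substance the two coincide. Your preliminary description of the equivalence classes $[X]$ (global sign flip on $\langle X\rangle$ plus free $\emptyset$/$A_i$ choices outside $\langle X\rangle$) is correct and matches the paper's characterization~\eqref{eq:sim}. One caveat: your suggestion that pairwise feasibility should imply global laminarizability by a Helly-type argument is not something to lean on --- \cref{lem:nonlaminar} of this paper exhibits obstructions to laminarizability that are inherently about triples of sets (indexed by triples of parts $i,j,k$), so the algorithm of \cite{arxiv/HIMZ18} cannot be a purely pairwise sign-propagation; since you defer to that algorithm anyway, this does not invalidate the proposal, but the sketched mechanism should not be presented as the likely proof.
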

The main part of this paper is to show the polynomial-time solvability
of {\sc Displaying},
which implies \cref{thm:main} for complete $\mathcal{A}$-partite quartet systems.
\begin{thm}\label{thm:displaying}
	{\sc Displaying} can be solved in $O(|\mathcal{A}|n^4)$ time.
\end{thm}

An outline of our algorithm for {\sc Displaying} is the following.
%Recall that, for $\mathcal{A} = \{ A_1, A_2, \dots, A_r \}$,
%a complete $\mathcal{A}$-partite quartet system $\mathcal{Q}$ is the union of bipartite quartet systems $\mathcal{Q}_{ij}$ on $A_i \cup A_j$ for $i,j$,
%i.e., $\mathcal{Q} = \bigcup_{1 \leq i < j \leq r} \mathcal{Q}_{ij}$.
\begin{itemize}
	\item
	The building block is an $O(|\mathcal{Q}|)$-time algorithm solving \QCP\ 
	(not {\sc Displaying})
	for a bipartite quartet system $\mathcal{Q}$ on $A \cup B$,
	which is described in \cref{subsec:bipartite algo}.
	The algorithm consists of two steps.
	We first fix $a_0$ in $A$,
	and, for each $a \in A \setminus \{ a_0 \}$,
	construct a laminar family displaying the restriction of $\mathcal{Q}$ to $\{a_0,a\} \cup B$ (\cref{subsubsec:r=2 |A|=2}).
	Next,
	we combine the resulting $|A|-1$ laminar families
	and obtain a laminar family displaying $\mathcal{Q}$ if $\mathcal{Q}$ is compatible (\cref{subsubsec:r=2 general}).
	
	\item
	Then the displaying algorithm for
	a general complete $\A$-partite quartet system $\Q = \bigcup_{1 \leq i < j \leq r} \Q_{ij}$ is
	described in \cref{subsec:complete algo} as:
	For each $i,j$, 
	consider the bipartite quartet system $\Q_{ij}$ on $A_i \cup A_j$,
	and
	obtain a laminar family displaying $\Q_{ij}$ by the above algorithm.
	We combine the resulting laminar families to obtain a solution of \Dis,
	which is guaranteed to be laminarizable if $\Q$ is compatible.
\end{itemize}
For proving the correctness of the above combining procedures,
we reveal a uniqueness property of
a laminarizable $\A$-cut family displaying
a compatible quartet system,
which seems interesting in its own right:
For a bipartite quartet system $\Q_{ij}$,
a laminarizable
family displaying $\Q_{ij}$ is uniquely determined up
to the equivalence relation $\sim$ (\cref{prop:r=2 unique}).
Also for a multipartite quartet system $\Q$,
a minimal laminarizable family displaying $\Q$ is uniquely
determined up to $\sim$ (\cref{prop:r>=3 unique}).
In fact, the above displaying algorithm
outputs this minimal laminarizable family.

\subsection{Algorithm for complete bipartite quartet system}\label{subsec:bipartite algo}
We first construct a polynomial-time algorithm for \QCP\ for complete bipartite quartet systems.
In the following,
$\mathcal{A}$ is a bipartition of $[n]$ represented as $\{A, B\}$ with $\min \{ |A|, |B| \} \geq 2$.
In this case, $X$ is an $\mathcal{A}$-cut if and only if $\emptyset \neq X \cap A \neq A$ and $\emptyset \neq X \cap B \neq B$,
and for $\mathcal{A}$-cuts $X$ and $Y$, $X \sim Y$ if and only if $X = Y$ or $X = [n] \setminus Y$.

For a compatible bipartite quartet system $\mathcal{Q}$,
there is a laminar $\mathcal{A}$-cut family $\mathcal{F}$ displaying $\mathcal{Q}$ such that there is no $X \in \mathcal{F}$ with given $a \in X$.
The following proposition implies that such $\mathcal{F}$ is unique.
\begin{prop}\label{prop:r=2 unique}
	Suppose that a complete bipartite quartet system $\mathcal{Q}$ is compatible.
	Then a laminarizable $\mathcal{A}$-cut family $\mathcal{F}$ displaying $\mathcal{Q}$ is uniquely determined up to $\sim$.
\end{prop}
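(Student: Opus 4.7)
The plan is to reduce to the tree level. Since $\mathcal{F}$ is laminarizable, up to $\sim$ we may assume it is laminar, so it suffices to show that any two laminar $\mathcal{A}$-cut families $\mathcal{L}_1, \mathcal{L}_2$ displaying $\mathcal{Q}$ satisfy $\mathcal{L}_1 \sim \mathcal{L}_2$. Every $\mathcal{A}$-cut $X$ satisfies $\min\{|X|, |[n] \setminus X|\} \geq 2$, so by the preliminaries $\mathcal{L}_1, \mathcal{L}_2$ correspond to phylogenetic trees $T_1, T_2$, both of which display $\mathcal{Q}$ by \cref{lem:definition}. Since the internal-edge bipartitions of $T_2$ are precisely $\{\{Y, [n] \setminus Y\} : Y \in \mathcal{L}_2\}$, it is enough to prove that for every $X \in \mathcal{L}_1$ the bipartition $\{X, [n] \setminus X\}$ is realized by some internal edge of $T_2$; symmetry then yields $\mathcal{L}_1 \sim \mathcal{L}_2$.

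Fix such an $X$ and write $A_1 := X \cap A$, $A_2 := A \setminus X$, $B_1 := X \cap B$, $B_2 := B \setminus X$, all non-empty. For every 4-tuple $(a_1, a_2, b_1, b_2) \in A_1 \times A_2 \times B_1 \times B_2$ the set $X$ witnesses $a_1 b_1 || a_2 b_2$, so this quartet belongs to $\mathcal{Q}$ and is displayed by $T_2$. I consider the two Steiner subtrees $T_X$ and $T_{\bar X}$ of $T_2$ (the minimal subtrees containing $X$ and $\bar X := [n] \setminus X$, respectively) and first argue that they are \emph{edge-disjoint}. If they shared an edge $e$ with components $(C, C')$ of $T_2 \setminus e$, then $X$ spanning both sides of $e$ lets me pick $a_1 \in A_1, b_1 \in B_1$ on opposite sides of $e$ (by a brief case analysis using $A_1, B_1 \neq \emptyset$); analogously, $\bar X$ spanning both sides lets me pick $a_2 \in A_2, b_2 \in B_2$ on opposite sides. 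The 4-tuple $(a_1, a_2, b_1, b_2)$ is then distributed 2-2 across $e$, and the forced split is either $\{a_1, a_2\} \mid \{b_1, b_2\}$ or $\{a_1, b_2\} \mid \{a_2, b_1\}$, so $T_2$'s restriction displays $a_1 a_2 || b_1 b_2$ or $a_1 b_2 || a_2 b_1$, incompatible with $a_1 b_1 || a_2 b_2 \in \mathcal{Q}$.

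Given edge-disjointness, $T_X$ and $T_{\bar X}$ either are vertex-disjoint --- in which case the first edge of the connecting path, on the $T_X$-side, is an internal edge of $T_2$ realizing $\{X, \bar X\}$ --- or share exactly one vertex $v$, say of degree $k$ in $T_X$ and $m$ in $T_{\bar X}$. If $k = 1$ or $m = 1$, the unique $T_X$- or $T_{\bar X}$-edge at $v$ is the desired separating internal edge. The main obstacle is the remaining case $k, m \geq 2$: here I plan to choose $a_1 \in A_1, b_1 \in B_1$ in distinct $T_X$-branches at $v$ (a short case split on whether some branch avoids $A_1$ or avoids $B_1$ shows this is always possible), and similarly $a_2, b_2$ in distinct $T_{\bar X}$-branches at $v$. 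These four leaves then occupy four distinct branches of $T_2$ at $v$, so the restriction of $T_2$ to them is a star centered at $v$; consequently the paths $a_1$-$b_1$ and $a_2$-$b_2$ both pass through $v$ and $T_2$ does not display the strong quartet $a_1 b_1 || a_2 b_2$, contradicting its membership in $\mathcal{Q}$. This exhausts all cases and establishes the proposition.
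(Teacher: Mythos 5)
Your proof is correct, and it takes a genuinely different route from the paper's. The paper argues entirely at the level of laminar set families: it assumes $\mathcal{F}\not\sim\mathcal{F}'$, picks $Z\in\mathcal{F}$ with no equivalent in $\mathcal{F}'$, chooses $a,b$ so that $Z$ is the minimal member containing them, compares $Z$ with the minimal $Z'\in\mathcal{F}'$ containing $a,b$, and runs a four-case analysis on the relative position of $Z$ and $Z'$ to exhibit an explicit quartet $ab\,||\,a'b'$ in the symmetric difference of the two displayed systems. You instead pass to the phylogenetic trees $T_1,T_2$ via \cref{lem:definition} and the preliminaries, and show directly that every $\mathcal{A}$-cut $X\in\mathcal{L}_1$ is realized as an internal-edge split of $T_2$, by proving the Steiner subtrees $T_X$ and $T_{\bar X}$ are edge-disjoint and then that they cannot meet in a vertex of degree $\geq 2$ in both; the key point in each step is that all quartets $a_1b_1\,||\,a_2b_2$ with $a_1\in X\cap A$, $b_1\in X\cap B$, $a_2\in A\setminus X$, $b_2\in B\setminus X$ lie in $\mathcal{Q}$ and must be displayed by $T_2$. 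The paper's argument is self-contained on set families and hands you a concrete distinguishing quartet (useful as a certificate); yours is more geometric and arguably cleaner, since the tree picture collapses the paper's case analysis into the two structural alternatives ``shared edge'' and ``shared cut vertex.'' One cosmetic remark: your sub-case $k=1$ or $m=1$ is vacuous --- a degree-one vertex of a Steiner subtree is a terminal, hence a leaf of $T_2$, which cannot carry two edge-disjoint subtrees --- and the edge you name there would not in general realize $\{X,\bar X\}$ (the terminal $v\in X$ would sit on the wrong side); since the case cannot occur this does not affect correctness, but it would be better to rule it out than to ``handle'' it.
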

\begin{proof}
	Let $\mathcal{F}$ and $\mathcal{F}'$ be laminar $\mathcal{A}$-cut families with $\mathcal{F} \not\sim \mathcal{F}'$,
	and $\mathcal{Q}$ and $\mathcal{Q}'$ be the complete bipartite quartet systems displayed by $\mathcal{F}$ and $\mathcal{F}'$, respectively.
	It suffices to show $\mathcal{Q} \neq \mathcal{Q}'$.
	If $\mathcal{F} = \emptyset$ or $\mathcal{F}' = \emptyset$,
	then obviously $\mathcal{Q} \neq \mathcal{Q}'$.
	Hence, in the following, we can assume that $\mathcal{F} \neq \emptyset \neq \mathcal{F}'$ and some $Z \in \mathcal{F}$ satisfies $Z \not\sim X'$ for all $X' \in \mathcal{F}'$.
	
	Let $a \in Z \cap A$ and $b \in Z \cap B$ be elements in $[n]$ such that $Z$ is the minimal member in $\mathcal{F}$ containing $a$ and $b$.
	Suppose that, for all $X' \in \mathcal{F}'$, we have $a \in X' \not\ni b$ or $a \not\in X' \ni b$.
	Then, for $a' \in A \setminus Z$ and $b' \in B \setminus Z$,
	it holds that $\mathcal{Q} \ni ab || a'b' \not\in \mathcal{Q}'$,
	as required.
	Hence, without loss of generality,
	we can assume that there is a member in $\mathcal{F}'$ containing both $a$ and $b$.
	Let $Z'$ be the minimal member in $\mathcal{F}'$ containing $a$ and $b$;
	such $Z'$ is uniquely determined by the laminarity.
	Then following holds.
	\begin{cl}
		For $a' \in A$ and $b' \in B$,
		it holds
		$ab || a'b' \not\in \mathcal{Q}'$ if exactly one of $a'$ and $b'$ belongs to $Z'$.
	\end{cl}
	\begin{proof}[Proof of Claim.]
		If $ab || a'b' \in \mathcal{Q}'$ for some $a' \in Z' \cap A$ and $b' \in B \setminus Z'$,
		then there is $X' \in \mathcal{F}'$ such that $a, b \in X' \not\ni a', b'$ or $a, b \not\in X' \ni a', b'$.
		The former case contradicts the minimality of $Z'$,
		and the latter case contradicts the laminarity of $\mathcal{F}'$.
	\end{proof}
	
	In the following,
	we show that there is a quartet tree $ab || a'b'$ in the symmetric difference between $\mathcal{Q}$ and $\mathcal{Q}'$
	for each of the four cases of the pair $(Z, Z')$.
	
	(i) $Z \subseteq Z'$.
	Without loss of generality,
	we assume $(Z' \setminus Z) \cap A \neq \emptyset$.
	For $a' \in (Z' \setminus Z) \cap A$ and $b' \in B \setminus Z'$,
	it holds that $\mathcal{Q} \ni ab || a'b' \not\in \mathcal{Q}'$
	by Claim,
	as required.
		
		(ii) $Z \supseteq Z'$.
		We have $Z' \not\in \mathcal{F}$ by the minimality of $Z$ and $[n] \setminus Z' \not\in \mathcal{F}$
		by the laminarity of $\mathcal{F}$.
		Hence $Z'$ satisfies $Z' \not\sim X$ for all $X \in \mathcal{F}$.
		By changing the role of $Z$ and $Z'$,
		reducing to (i).
		
		(iii) $Z \not\subseteq Z'$, $Z \not\supseteq Z'$, and $Z \cup Z' \neq [n]$.
		We can take a pair $(a', b')$ such that [$a' \in A \cap (Z' \setminus Z)$ and $b' \in B \setminus (Z \cup Z')$] or [$a' \in A \setminus (Z \cup Z')$ and $b' \in B \cap (Z' \setminus Z)$].
		Indeed, suppose that $A \cap (Z' \setminus Z) = \emptyset$ or $B \subseteq Z \cup Z'$.
		Then $A \setminus (Z \cup Z') \neq \emptyset$ and $B \cap (Z' \setminus Z) \neq \emptyset$ hold.
		For such a pair $(a', b')$,
		it holds that $\mathcal{Q} \ni ab || a'b' \not\in \mathcal{Q}'$ by Claim,
		as required.
		
		(iv) $Z \not\subseteq Z'$, $Z \not\supseteq Z'$, and $Z \cup Z' = [n]$.
		By replacing $X$ with $[n] \setminus X$ in $\mathcal{F}$ appropriately,
		we can redefine $\mathcal{F}$ as a laminar family containing $[n] \setminus Z$.
		Retake $a \in ([n] \setminus Z) \cap A$ and $b \in ([n] \setminus Z) \cap B$ as elements
		such that $[n] \setminus Z$ is a minimal subset of $[n]$ in $\mathcal{F}$ containing $a$ and $b$.
		Let $Z''$ be a minimal element in $\mathcal{F}'$ containing the new $a$ and $b$.
		Since $Z' \supseteq [n] \setminus Z$ and $Z' \supseteq Z''$,
		we have $([n] \setminus Z) \cup Z'' \neq [n]$.
		Thus
		(iv) can reduce to (i), (ii), or (iii) for the pair $([n] \setminus Z, Z'')$.
	\end{proof}
	
%We introduce two notions used in \cref{subsubsec:r=2 |A|=2,subsubsec:r=2 general}.

\subsubsection{Case of $|A| = 2$ or $|B| = 2$}\label{subsubsec:r=2 |A|=2}
We consider the case of $|A| = 2$ or $|B| = 2$.
We may assume $A = \{a_0, a\}$ with $a_0 \neq a$.
In the following, we abbreviate $\{a_0, a\}$ as $a_0a$.
For $\mathcal{F} \subseteq 2^{[n]}$ and $X \subseteq [n]$,
we denote $\{ F \cup X \mid F \in \mathcal{F} \}$ by $\mathcal{F} \sqcup X$.
For $C \subseteq A$ and $D \subseteq B$, we denote by $\mathcal{Q}_{C,D}$ the set of quartet trees for $c, c' \in C$ and $d, d' \in D$ in $\mathcal{Q}$.

We first explain the idea behind our algorithm (Algorithm~1).
Assume that a complete $\{ a_0a, B \}$-partite quartet system $\mathcal{Q}$ is compatible.
By \cref{prop:r=2 unique},
there uniquely exists a laminar $\{ a_0a, B \}$-cut family $\mathcal{F}$ displaying $\mathcal{Q}$
such that no $X \in \mathcal{F}$ contains $a_0$.
This implies that all $X \in \mathcal{F}$ contains $a$ since $\mathcal{F}$ is an $\{ a_0a, B \}$-cut family.
Hence, by the laminarity,
$\mathcal{F}$ is a chain $\{B_1, B_2, \dots, B_m\} \sqcup \{a\}$ with $\emptyset =: B_0 \subsetneq B_1 \subsetneq B_2 \subsetneq \cdots \subsetneq B_m \subsetneq B_{m+1} := B$.

Choose an arbitrary $b \in B$. 
Consider the index $k \in [m+1]$ 
such that $b \in B_k$ and $b \not\in B_{k-1}$.
Partition $B$ into three sets
$B^- := B_{k-1}$, $B^= := B_k \setminus B_{k-1}$, and $B^+ := B \setminus B_k$.
The tripartition $\{ B^-, B^=, B^+\}$ can be determined
by checking quartets in ${\cal Q}$ having leaves $a_0, a,b$,
where recall~\eqref{eq:complete bipartite} for the definition of a complete bipartite system:
\begin{align}
b' \in B^-  &\iff  a_0b || ab' \in \mathcal{Q},\label{eq:B^-} \\
b' \in B^=  &\iff   b' = b\ {\rm or}\ a_0a | bb' \in \mathcal{Q},\label{eq:B^=} \\
b' \in B^+  &\iff  a_0b' || ab \in \mathcal{Q}.\label{eq:B^+}
\end{align}
Observe that $\mathcal{Q}_{a_0a, B^-}$ is displayed by 
${\cal F}^- := \{ B_1, \dots, B_{k-2} \} \sqcup \{a\}$ and that $\mathcal{Q}_{a_0a, B^+}$ is displayed by 
${\cal F}^+ := \{ B_{k+1} \setminus B_{k}, \dots, B_m \setminus B_{k} \} \sqcup \{a\}$.
After determining $B^-, B^=, B^+$,   
we can apply recursively the same procedure to $\mathcal{Q}_{a_0a, B^-}$ 
and $\mathcal{Q}_{a_0a, B^+}$, and
obtain ${\cal F}^-$ and
${\cal F}^+$.
Combining them with $B_k = B^= \cup B^-$ and $B_{k-1} = B^-$, we obtain $\mathcal{F} = \{B_1, B_2, \dots, B_m\} \sqcup \{a\}$ as required.

The formal description of Algorithm~1 is the following:
\begin{description}
	\item[Algorithm~1 (for complete $\{ a_0a, B \}$-partite quartet system with pivot $a$):]
	\item[Input:] A complete $\{ a_0a, B \}$-partite quartet system $\mathcal{Q}$.
	\item[Output:] Either detect the incompatibility of $\mathcal{Q}$,
	or obtain the (unique) laminar $\{ a_0a, B \}$-cut family $\mathcal{F}$ displaying $\mathcal{Q}$ such that no $X \in \mathcal{F}$ contains $a_0$.
	\item[Step 1:]
	Define $\mathcal{F} := \emptyset$.
	If $\mathcal{Q} = \emptyset$, that is, $|B|$ is at most one,
	then output $\mathcal{F}$ and stop.
	\item[Step 2:]
	Choose an arbitrary $b \in B$.
	Define
	$B^-$,
	$B^=$,
	and $B^+$ as~\eqref{eq:B^-},~\eqref{eq:B^=}, and~\eqref{eq:B^+}, respectively.
	If $B^- \neq \emptyset$,
	then update $\mathcal{F} \leftarrow \mathcal{F} \cup \{B^- \cup a\}$.
	If $B^- \cup B^= \neq B$,
	then update $\mathcal{F} \leftarrow \mathcal{F} \cup \{B^- \cup B^= \cup a\}$.
	
	\item[Step 3:]
	If Algorithm~1 for $\mathcal{Q}_{a_0a, B^+}$ with pivot $a$ detects the incompatibility of $\mathcal{Q}_{a_0a, B^+}$ or Algorithm~1 for $\mathcal{Q}_{a_0a, B^-}$ with pivot $a$ detects the incompatibility of $\mathcal{Q}_{a_0a, B^-}$,
	then output ``$\mathcal{Q}$ is not compatible'' and stop.
	Otherwise, let $\mathcal{F}^+$ and $\mathcal{F}^-$ be the output families of Algorithm~1 for $\mathcal{Q}_{a_0a, B^+}$ and for $\mathcal{Q}_{a_0a, B^-}$, respectively.
	Update
	\begin{align*}
	\mathcal{F} \leftarrow \mathcal{F} \cup \mathcal{F}^- \cup \left(\mathcal{F}^+ \sqcup (B^- \cup B^=)\right).
	\end{align*}
	\item[Step 4:] If $\mathcal{F}$ displays $\mathcal{Q}$,
	then output $\mathcal{F}$.
	Otherwise, output ``$\mathcal{Q}$ is not compatible.''
	\qqed
\end{description}

\begin{prop}\label{prop:algo 1}
	Algorithm~{\rm 1} solves \QCP\ for a complete $\{ a_0a, B \}$-partite quartet system $\mathcal{Q}$ in $O(|\mathcal{Q}|)$ time.
\end{prop}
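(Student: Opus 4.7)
The plan is to prove correctness by induction on $|B|$, leaning on \cref{prop:r=2 unique} for uniqueness in the compatible case, and to analyze the running time separately. The base case $|B|\le 1$ is immediate since $\mathcal{Q}=\emptyset$ and Step~1 returns $\mathcal{F}=\emptyset$. For the inductive step, suppose that $\mathcal{Q}$ is compatible. Since laminarity combined with the common element $a$ forces nestedness, \cref{prop:r=2 unique} yields that the unique laminar $\{a_0a,B\}$-cut family $\mathcal{F}^\ast$ displaying $\mathcal{Q}$ with no member containing $a_0$ has the chain form $\{B_1,\dots,B_m\}\sqcup\{a\}$ with $\emptyset\subsetneq B_1\subsetneq\cdots\subsetneq B_m\subsetneq B$. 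Let $k$ be the unique index with $b\in B_k\setminus B_{k-1}$ for the pivot $b$ chosen in Step~2. A direct check of~\eqref{eq:complete bipartite} on $\{a_0,a,b,b'\}$ shows that the tripartition defined in Step~2 satisfies $B^-=B_{k-1}$, $B^==B_k\setminus B_{k-1}$, and $B^+=B\setminus B_k$. Consequently $\mathcal{Q}_{a_0a,B^-}$ is displayed by the chain $\{B_1,\dots,B_{k-2}\}\sqcup\{a\}$ and $\mathcal{Q}_{a_0a,B^+}$ by the shifted chain $\{B_{k+1}\setminus B_k,\dots,B_m\setminus B_k\}\sqcup\{a\}$; both are compatible, and by induction the recursive calls return precisely these chains, which Step~3 reassembles into $\mathcal{F}^\ast$ and Step~4 accepts.

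When $\mathcal{Q}$ is not compatible, either some recursive call already outputs ``incompatible'' and the current call terminates, or both sub-systems happen to be individually compatible and a chain $\mathcal{F}$ is produced. In the latter case, $\mathcal{F}$ is by construction a laminar $\{a_0a,B\}$-cut family: all its members contain $a$ but not $a_0$, and the prefix/shift pattern in Step~3 places the recursive chain $\mathcal{F}^-$ below $B^-\cup\{a\}$ and the translated chain $\mathcal{F}^+\sqcup(B^-\cup B^=)$ above $B^-\cup B^=\cup\{a\}$, yielding a single nested chain. If this $\mathcal{F}$ happened to display $\mathcal{Q}$, then by \cref{lem:definition} $\mathcal{Q}$ would be compatible, a contradiction. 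Hence Step~4 necessarily fails and outputs ``incompatible'', as required.

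For the running time, Step~2 inspects the $|B|-1$ quartets in $\mathcal{Q}$ involving $\{a_0,a,b\}$, costing $O(|B|)$. Letting $T(n)$ denote the time for $|B|=n$ excluding the Step~4 check executed at the outermost call, the recurrence $T(n)=T(|B^-|)+T(|B^+|)+O(n)$ with $|B^-|+|B^+|\le n-1$ solves to $T(n)=O(n^2)=O(|\mathcal{Q}|)$. The Step~4 displaying check at the top level can be done in $O(|\mathcal{Q}|)$ time by precomputing the level of each $b\in B$ in the chain $\mathcal{F}$ with a single sweep and then examining each of the $\binom{|B|}{2}$ quartets once. The main obstacle is the incompatible case: one must verify that the chain produced by the recursive combination is actually laminar and is the \emph{only} candidate that could display $\mathcal{Q}$, so that a mismatch in Step~4 is a conclusive incompatibility witness. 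Both facts are secured by the prefix-shift structure of the recursion together with \cref{prop:r=2 unique}.
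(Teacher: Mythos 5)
Your proof is correct and takes essentially the same route as the paper: the paper's validity argument simply defers to the informal chain/tripartition/recursion discussion preceding Algorithm~1 (which you formalize as an induction on $|B|$ using \cref{prop:r=2 unique}), and its complexity analysis likewise amounts to each quartet on $\{a_0,a,b,b'\}$ being examined at most once. You additionally make explicit the point the paper leaves implicit, namely that the output is always a laminar chain so that a failed Step~4 check is a conclusive incompatibility witness; this is a worthwhile clarification but not a different method.
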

	\begin{proof}
		(Validity).
		It suffices to show that
		if $\mathcal{Q}$ is compatible then Algorithm~1 outputs an laminar $\mathcal{A}$-cut family $\mathcal{F}$ displaying $\mathcal{Q}$,
		and it follows from the argument before the formal description of Algorithm~1.

		(Complexity).
		We can easily see that, in Steps~2 and~3, Algorithm~1 checks a quartet tree for $a_0, a_1, b, b'$ at most one time for each distinct $b, b' \in B$.
		It takes $|Q|$ time to check whether $\mathcal{F}$ displays $\mathcal{Q}$ in Step~4.
		Thus the running-time is $O(|\mathcal{Q}|)$.
	\end{proof}

\subsubsection{General case}\label{subsubsec:r=2 general}
We consider general complete bipartite quartet systems; $\mathcal{A}$ is a bipartition $\{A, B\}$ of $[n]$.
As in \cref{subsubsec:r=2 |A|=2}, we first explain the idea behind our algorithm (Algorithm~2).
Assume that a complete $\mathcal{A}$-partite quartet system $\mathcal{Q}$ is compatible.
By \cref{prop:r=2 unique},
there uniquely exists a laminar $\mathcal{A}$-cut family $\mathcal{F}$ displaying $\mathcal{Q}$ such that no $X \in \mathcal{F}$ contains $a_0$.

Define $\mathcal{F}^a$ as the output of Algorithm~1 for $\mathcal{Q}_{a_0a, B}$ with pivot $a$.
Since $\mathcal{Q}_{a_0a, B}$ is displayed by $\{ X \cap B \mid a \in X \in \mathcal{F} \} \sqcup \{a\}$,
it holds that $\mathcal{F}^a = \{ X \cap B \mid a \in X \in \mathcal{F} \} \sqcup \{a\}$ by \cref{prop:r=2 unique,prop:algo 1}.
Define $\mathcal{F} \cap B := \{ X \cap B \mid X \in \mathcal{F} \}$.
It can be easily seen that $\mathcal{F} \cap B = \bigcup_{a \in A \setminus \{a_0\}} \{ X \cap B \mid X \in \mathcal{F}^a \}$.
In the following,
we consider to combine $\mathcal{F}^a$s appropriately.

Take any $D \in \mathcal{F} \cap B$,
and define $A_D := \{ a \in A \setminus \{a_0\} \mid \{a\} \cup D \in \mathcal{F}^a \}$.
By the laminarity of $\mathcal{F}$,
$A_D \cup D$ is the unique maximal set $X$ in $\mathcal{F}$ such that $X \cap B = D$.
Hence we can construct the set $\mathcal{G} := \{ A_D \cup D \mid D \in \mathcal{F} \cap B \} \subseteq \mathcal{F}$ from $\mathcal{F}^a$ ($a \in A \setminus \{a_0\}$).
Note that $\mathcal{G}$ is laminar.

All the left is to determine all nonmaximal sets $X \in \mathcal{F}$ with $X \cap B = D$ for each $D \in \mathcal{F} \cap B$.
Fix an arbitrary $D \in \mathcal{F} \cap B$.
Observe that, by the laminarity of $\mathcal{F}$, the set $\{ X \in \mathcal{F} \mid X \cap B = D \}$ is a chain $\{ X_1, X_2, \dots, X_m \}$ with $X_1 \subsetneq X_2 \subsetneq \cdots \subsetneq X_m = A_D \cup D$.
We are going to identify this chain with the help of Algorithm 1. 
Let $X^- := \bigcup\{ X' \in \mathcal{G} \mid X' \subsetneq A_D \cup D \}$,
and choose an arbitrary $b_0 \in B \setminus D$ and $b \in D$.
Note that $X_1 \supseteq X^-$ by the laminarity of $\mathcal{F}$.

Consider that we apply Algorithm~1 to $\mathcal{Q}_{A_D \setminus X^-, b_0b}$ and obtain a chain $\tilde{\mathcal{H}}$.
If $X_1 \cap A \supsetneq X^- \cap A$,
then $\tilde{\mathcal{H}}$ forms of $\{ (X_1 \setminus X^-) \cap A, (X_2 \setminus X^-) \cap A, \dots, (X_m \setminus X^-) \cap A \}  \sqcup \{ b_0, b \}$.
If $X_1 \cap A = X^- \cap A$,
then $\tilde{\mathcal{H}}$ forms of $\{ (X_2 \setminus X^-) \cap A, \dots, (X_m \setminus X^-) \cap A \}  \sqcup \{ b_0, b \}$.
Therefore we need to detect whether the minimal member in $\tilde{\mathcal{H}}$
is $((X_1 \setminus X^-) \cap A) \cup \{ b_0, b \}$ or $((X_2 \setminus X^-) \cap A) \cup \{ b_0, b \}$,
which can be done by constructing $X_1$ individually as follows.

%We first consider the easier case $X_1 \cap A \supsetneq X^- \cap A$.
%Then apply Algorithm~1 to $\mathcal{Q}_{A_D \setminus X^-, b_0b}$
%and obtain $\{ (X_1 \setminus X^-) \cap A, (X_2 \setminus X^-) \cap A, \dots, (X_m \setminus X^-) \cap A \}  \sqcup \{ b_0, b \}$ 
%(that displays  $\mathcal{Q}_{A_D \setminus X^-, b_0b}$).
%From this we obtain $\{ X_1, X_2, \dots, X_m \}$, as required.
%
%Next consider the case $X_1 \cap A = X^- \cap A$.
%In this case,
%by applying Algorithm~1 to $\mathcal{Q}_{A_D \setminus X^-, b_0b}$,
%we only obtain $\{ (X_2 \setminus X^-) \cap A, \dots, (X_m \setminus X^-) \cap A \}  \sqcup \{ b_0, b \}$,
%and hence $\{ X_2, \dots, X_m \}$.
%Therefore we need to construct $X_1$ individually.
Pick any $a \in X^- \cap A$ and retake $b$ from $D \setminus X'$
for maximal $X' \in {\cal G}$ with 
$a \in X' \subseteq X^-$. 
For $a' \in (X_m \setminus X^-) \cap A$,
it cannot happen that $ab_0 || a'b \in \mathcal{Q}$
since all $X \in \mathcal{F}$ containing $a', b$ also contain $a$.
Furthermore we can say that $ab || a'b_0 \in \mathcal{Q}$ if and only if $a' \not\in X_1 (\ni a, b)$.
This implies that $aa' | bb_0 \in \mathcal{Q}$ 
if and only if $a'$ belongs to $X_1$.
That is, it holds that
\begin{align*}
	X_1 \cap A = (X^- \cap A) \cup \{ a' \in A_D \setminus X^- \mid aa' | b_0b \in \mathcal{Q} \}.
\end{align*}

Thus,
if there is $a' \in A_D \setminus X^-$ with $aa' | b_0b \in \mathcal{Q}$,
then it holds $X_1 \cap A \supsetneq X^- \cap A$.
Since $\tilde{\mathcal{H}} = \{ (X_1 \setminus X^-) \cap A, (X_2 \setminus X^-) \cap A, \dots, (X_m \setminus X^-) \cap A \}  \sqcup \{ b_0, b \}$,
we can construct $\{ X_1, X_2, \dots, X_m \}$ from $\tilde{\mathcal{H}}$.
If there is no $a' \in A_D \setminus X^-$ with $aa' | b_0b \in \mathcal{Q}$,
then it holds $X_1 \cap A = X^- \cap A$.
Since $\tilde{\mathcal{H}} = \{ (X_2 \setminus X^-) \cap A, \dots, (X_m \setminus X^-) \cap A \}  \sqcup \{ b_0, b \}$,
we can construct $\{ X_1, X_2, \dots, X_m \}$ from $\tilde{\mathcal{H}}$ and $X_1 = X^- \cup D$.

The formal description of Algorithm~2 is the following;
note that,
if $\mathcal{F}$ is laminar,
then $|\mathcal{F}|$ is at most $2n$ (see e.g., \cite[Theorem 3.5]{book/Schrijver03}).
\begin{description}
	\item[Algorithm~2 (for complete bipartite quartet system):]
	\item[Input:]
	A complete bipartite quartet system $\mathcal{Q}$.
	\item[Output:] Either detect the incompatibility of $\mathcal{Q}$,
	or obtain a laminar $\mathcal{A}$-cut family $\mathcal{F}$ displaying $\mathcal{Q}$.
	\item[Step 1:] Fix an arbitrary $a_0 \in A$.
	For each $a \in A \setminus \{a_0\}$,
	we execute Algorithm~1 for $\mathcal{Q}_{a_0a, B}$ with pivot $a$.
	If Algorithm~1 outputs ``$\mathcal{Q}_{a_0a, B}$ is not compatible'' for some $a$,
	then output ``$\mathcal{Q}$ is not compatible'' and stop.
	Otherwise, obtain the output $\mathcal{F}^a$ for each $a$.
	\item[Step 2:]
	Let $\mathcal{G} := \emptyset$.
	For each $a \in A \setminus \{a_0\}$,
	update $\mathcal{G}$ as
	\begin{align*}
	\mathcal{G} \leftarrow\ &\mathcal{G} \setminus \{ Y \in \mathcal{G} \mid \exists X \in \mathcal{F}^a \textrm{ such that } X \cap B = Y \cap B \}\\
	&\cup \left(\{Y \in \mathcal{G} \mid \exists X \in \mathcal{F}^a \textrm{ such that } X \cap B = Y \cap B \} \sqcup \{a\}\right)\\
	&\cup
	\{ X \in \mathcal{F}^a \mid \nexists Y \in \mathcal{G} \textrm{ such that } X \cap B = Y \cap B \}.
	\end{align*}
	If $|\mathcal{G}| > 2n$ for some $a$,
	then output ``$\mathcal{Q}$ is not compatible'' and stop.
	\item[Step 3:]
	If $\mathcal{G}$ is not laminar,
	then output ``$\mathcal{Q}$ is not compatible'' and stop.
	Otherwise, define $\mathcal{F} := \mathcal{G}$.
	For each $X \in \mathcal{G}$,
	do the following:
	\begin{description}
		\item[3-1:]
		Let $X^- := \bigcup\{ X' \in \mathcal{G} \mid X' \subsetneq X \}$,
		and choose an arbitrary $b_0 \in B \setminus X$ and $b \in X \cap B$.
		\item[3-2:]
		Execute Algorithm~1 for $\mathcal{Q}_{(X \setminus X^-) \cap A, b_0b}$ with pivot $b$.
		If Algorithm~1 outputs ``$\mathcal{Q}_{(X \setminus X^-) \cap A, b_0 b}$ is not compatible,''
		then output ``$\mathcal{Q}$ is not compatible'' and stop.
		Otherwise, define
		\begin{align*}
		\mathcal{H} := \textrm{the output family of Algorithm~1} \sqcup (X^- \cup (X \cap B)).
		\end{align*}
		If $X^- \neq \emptyset$, then go to Step~3-3.
		Otherwise, go to Step~3-4
		\item[3-3:]
		Choose an arbitrary $a \in X^- \cap A$ and retake $b$ from $(X \setminus X') \cap B$
		for maximal $X' \in {\cal G}$ with 
		$a \in X' \subseteq X^-$.
		If there is no $a' \in (X \setminus X^-) \cap A$ with $aa' | b_0b \in \mathcal{Q}$,
		then update $\mathcal{H} \leftarrow \mathcal{H} \cup \{X^- \cup (X \cap B)\}$.
%		Define
%		\begin{align*}
%		X_1 := X^- \cup (X \cap B) \cup \{ a' \in (X \setminus X^-) \cap A \mid aa' | b_0b \in \mathcal{Q} \}.
%		\end{align*}
%		If $X_1$ is not included in the minimal element in $\mathcal{H}$,
%		then output ``$\mathcal{Q}$ is not compatible'' and stop.
%		Otherwise,
%		update $\mathcal{H} \leftarrow \mathcal{H} \cup \{X_1\}$.
		\item[3-4:]
		$\mathcal{F} \leftarrow \mathcal{F} \cup \mathcal{H}$.
	\end{description}
	\item[Step 4:]
	If $\mathcal{F}$ displays $\mathcal{Q}$,
	then output $\mathcal{F}$.
	Otherwise, output ``$\mathcal{Q}$ is not compatible.''
	\qqed
\end{description}

\begin{prop}\label{prop:algo 2}
	Algorithm~{\rm 2} solves \QCP\ for a complete bipartite quartet system $\mathcal{Q}$ in $O(|\mathcal{Q}|)$ time.
\end{prop}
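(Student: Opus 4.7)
The plan is to establish (i) \emph{validity}, i.e., that Algorithm~2 outputs a laminar $\mathcal{A}$-cut family displaying $\mathcal{Q}$ whenever $\mathcal{Q}$ is compatible and correctly reports incompatibility otherwise, and (ii) the $O(|\mathcal{Q}|)$ running-time bound. The validity argument will formalize the informal discussion preceding the algorithm, leaning on \cref{prop:r=2 unique} (uniqueness up to $\sim$ of the laminar witness) and \cref{prop:algo 1} (correctness of Algorithm~1 as a subroutine). For the incompatible case, I will verify that each early failure condition (an Algorithm~1 sub-call failing, $|\mathcal{G}| > 2n$, $\mathcal{G}$ non-laminar, a Step~3-2 sub-call failing) rules out the existence of \emph{any} laminar $\mathcal{A}$-cut family displaying $\mathcal{Q}$, whence incompatibility follows from \cref{lem:definition}; the display check in Step~4 serves as a safety net.

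For the compatible case, I would fix $\mathcal{F}$ to be the unique (up to $\sim$) laminar $\mathcal{A}$-cut family displaying $\mathcal{Q}$ with $a_0 \notin X$ for all $X \in \mathcal{F}$, and show in three stages that the algorithm recovers it. First, for each pivot $a \in A \setminus \{a_0\}$, the system $\mathcal{Q}_{a_0a, B}$ is displayed by the chain $\{X \cap B \mid a \in X \in \mathcal{F}\} \sqcup \{a\}$, so \cref{prop:algo 1} together with \cref{prop:r=2 unique} force Algorithm~1 to return this chain as $\mathcal{F}^a$. Second, the merging rule in Step~2 therefore aggregates the $\mathcal{F}^a$'s into precisely $\mathcal{G} = \{A_D \cup D \mid D \in \mathcal{F} \cap B\}$, the set of maximal members of $\mathcal{F}$ indexed by their $B$-restriction; laminarity of $\mathcal{G}$ is inherited from $\mathcal{F}$. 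Third, for each $X \in \mathcal{G}$, Step~3 recovers the chain $X_1 \subsetneq X_2 \subsetneq \cdots \subsetneq X_m = X$ of members of $\mathcal{F}$ whose $B$-restriction equals $X \cap B$.

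The third stage is the main obstacle. The output $\tilde{\mathcal{H}}$ of Step~3-2 is a chain on $\{b_0, b\}$ whose minimum corresponds either to $((X_1 \setminus X^-) \cap A) \cup \{b_0,b\}$ or to $((X_2 \setminus X^-) \cap A) \cup \{b_0,b\}$, depending on whether $X_1 \cap A$ strictly contains $X^- \cap A$. The key claim to prove is that, with $a \in X^- \cap A$ and $b \in (X \setminus X') \cap B$ for a maximal $X' \in \mathcal{G}$ satisfying $a \in X' \subseteq X^-$, one has $aa' | b_0b \in \mathcal{Q}$ iff $a' \in X_1 \cap A$. I would derive this by combining laminarity of $\mathcal{F}$ with the choice of $X'$ to show that every $Y \in \mathcal{F}$ containing both $a'$ and $b$ must also contain $a$: laminarity leaves only the cases $Y \subseteq X'$, $Y \supseteq X'$, and $Y \cap X' = \emptyset$, of which the first contradicts $b \in Y \setminus X'$, the second gives $a \in X' \subseteq Y$, and the third contradicts the chain structure of members of $\mathcal{F}$ meeting $\{a', b\}$. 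Consequently $aa'|b_0b \in \mathcal{Q}$ iff every $X_j$ containing $a'$ also contains $a$, which holds iff $a' \in X_1$. Step~3-3 implements exactly this test, and Step~3-4 then patches in the correct minimal member when needed.

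For complexity, the dominant cost lies in the Algorithm~1 invocations. Those in Step~1 run on the pairwise disjoint restrictions $\mathcal{Q}_{a_0a, B}$ and, by \cref{prop:algo 1}, sum to $O(|\mathcal{Q}|)$. Those in Step~3 run on $\mathcal{Q}_{(X \setminus X^-) \cap A, b_0b}$; by laminarity of $\mathcal{G}$, the sets $(X \setminus X^-) \cap A$ partition a subset of $A$ (each $a \in A$ lies in a unique minimal member of $\mathcal{G}$ containing it), so these inputs are pairwise disjoint subfamilies of $\mathcal{Q}$ and again sum to $O(|\mathcal{Q}|)$. The Step~2 merges and the Step~4 verification fit within the same budget using standard bookkeeping (hashing sets by their $B$-restriction), yielding the claimed $O(|\mathcal{Q}|)$ bound.
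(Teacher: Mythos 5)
Your proposal is correct and matches the paper's approach: the paper's own validity argument simply defers to the informal discussion preceding Algorithm~2 (which you formalize faithfully, including the key claim that $aa'|b_0b \in \mathcal{Q}$ iff $a' \in X_1$, proved via laminarity of $\mathcal{F}$), and your complexity accounting (disjoint sub-systems fed to Algorithm~1, plus $O(n^2) = O(|\mathcal{Q}|)$ overhead for the merging and checks) is equivalent to the paper's ``each quartet is examined at most once'' count. No substantive differences.
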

	\begin{proof}
		(Validity).
		It suffices to show that
		if $\mathcal{Q}$ is compatible then Algorithm~2 outputs an laminar $\mathcal{A}$-cut family $\mathcal{F}$ displaying $\mathcal{Q}$,
		and it follows from the argument before the formal description of Algorithm~2.

		(Complexity).
		We can easily see that, in Steps~1, 3-2, and~3-3, Algorithm~2 checks a quartet tree for $a, a', b, b'$ at most one time for each distinct $a, a' \in A$ and $b, b' \in B$.
		Hence Steps~1, 3-2, and~3-3 can be done in $O(|\mathcal{Q}|)$ time.
		In Step~2, the update takes $O(|\mathcal{F}^{a}| + |\mathcal{G}|) = O(n)$ time for each $a \in A \setminus \{a_0\}$ since $\mathcal{F}^a$ is a chain.
		Hence Step~2 can be done in $O(n^2)$ time.
		Checking the laminarity of $\mathcal{G}$ can be done in $O(n^2)$ time since $|\mathcal{G}| = O(n)$.
		For each $X \in \mathcal{G}$,
		it takes $O(n)$ time to construct $X^-$;
		Step~3-1 can be done in $O(n^2)$ time.
		Step~4 can be done in $O(|\mathcal{Q}|)$ time.
		Thus
		the running-time of Algorithm~1 is $O(|\mathcal{Q}| + n^2) = O(|\mathcal{Q}|)$
		since $|\mathcal{Q}| = \binom{|A|}{2}\binom{|B|}{2} = \Omega(n^2)$.
	\end{proof}

\subsection{Algorithm for complete multipartite quartet system}\label{subsec:complete algo}
In this subsection, we present a polynomial-time algorithm for complete multipartite quartet systems.
Let $\mathcal{A} := \{A_1, A_2, \dots, A_r\}$ be a partition of $[n]$ with $|A_i| \geq 2$ for all $i \in [r]$.
% We first observe another characterization of the equivalence relation $\sim$.
% For $X \subseteq [n]$,
% define
% \begin{align}\label{eq:ave}
% \ave{X} := \bigcup \{A_i \in \mathcal{A} \mid \emptyset \neq X \cap A_i \neq A_i \};
% \end{align}
% recall that $X$ is an $\mathcal{A}$-cut if and only if $\ave{X} \supseteq A_i \cup A_j$ for some distinct $i,j \in [r]$.
% One can see that,
% for $\mathcal{A}$-cuts $X, Y$,
% it holds that
% \begin{align}\label{eq:sim}
% X \sim Y \iff \{ \ave{X} \cap X, \ave{X} \setminus X \} = \{ \ave{Y} \cap Y, \ave{Y} \setminus Y \}.
% \end{align}
% We introduce further notations.
For the analysis of the running-time of Algorithm~4,
we assume $|A_1| \geq |A_2| \geq \cdots \geq |A_r|$.
For $R \subseteq [r]$ with $|R| \geq 2$, let $\mathcal{A}_R := \{A_i\}_{i \in R}$ and $A_R := \bigcup_{i \in R} A_i$.
For a complete $\mathcal{A}$-partite quartet system $\mathcal{Q} = \bigcup_{1 \leq i < j \leq r} \mathcal{Q}_{ij}$,
define $\mathcal{Q}_R := \bigcup_{i,j \in R,i<j} \mathcal{Q}_{ij}$.
That is, $\mathcal{Q}_R$ is the complete $\mathcal{A}_R$-partite quartet system included in $\mathcal{Q}$.
For an $\mathcal{A}$-cut family $\mathcal{F}$,
define $\mathcal{F}_R := \{ X \cap A_R \mid X \in \mathcal{F} \textrm{ such that } X \cap A_R \textrm{ is an $\mathcal{A}_R$-cut} \}$.
Note that $\mathcal{F}_R$ is an $\mathcal{A}_R$-cut family.
Then we can easily see the following lemma,
which says that partial information $\mathcal{F}_R$ of $\mathcal{F}$ can be obtained from $\mathcal{Q}_R$.
\begin{lem}\label{lem:restriction}
	Suppose $R \subseteq [r]$ with $|R| \geq 2$.
	If $\mathcal{Q}$ is displayed by $\mathcal{F}$,
	then $\mathcal{Q}_R$ is displayed by $\mathcal{F}_R$.
	Furthermore, if $\mathcal{Q}$ is compatible,
	then so is $\mathcal{Q}_R$.
\end{lem}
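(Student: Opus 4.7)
The plan is to unfold the definition of ``displays'' and verify it by simply passing to $\mathcal{F}_R = \{ X \cap A_R \mid X \in \mathcal{F},\ X \cap A_R \text{ is an } \mathcal{A}_R\text{-cut}\}$. Fix distinct $i,j \in R$, $a,a' \in A_i$, and $b,b' \in A_j$; the goal is to show that $ab \| a'b' \in \mathcal{Q}_R$ is equivalent to the existence of $Y \in \mathcal{F}_R$ witnessing the separation of the pair $\{a,b\}$ from the pair $\{a',b'\}$.

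For the forward direction, note $\mathcal{Q}_R \subseteq \mathcal{Q}$, so $ab\|a'b' \in \mathcal{Q}$ gives some $X \in \mathcal{F}$ with $a,b \in X \not\ni a',b'$ (the other case is symmetric). Set $Y := X \cap A_R$. Since $a \in Y \cap A_i$ and $a' \in A_i \setminus Y$, and similarly for $b,b'$ in $A_j$, the set $Y$ has $\emptyset \neq Y \cap A_k \neq A_k$ for $k \in \{i,j\} \subseteq R$, so $Y$ is an $\mathcal{A}_R$-cut and belongs to $\mathcal{F}_R$. Clearly $Y$ witnesses the separation required. For the reverse direction, any such $Y \in \mathcal{F}_R$ is of the form $X \cap A_R$ for some $X \in \mathcal{F}$, and since $a,a',b,b' \in A_R$, the same $X$ witnesses the separation in $[n]$; because $\mathcal{F}$ displays $\mathcal{Q}$, we obtain $ab \| a'b' \in \mathcal{Q}$, and since all four labels lie in $A_i \cup A_j$ with $i,j \in R$, in fact $ab \| a'b' \in \mathcal{Q}_{ij} \subseteq \mathcal{Q}_R$.

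For the ``furthermore'' part, suppose $\mathcal{Q}$ is compatible. By \cref{lem:definition} there is a laminar family $\mathcal{L}$ displaying $\mathcal{Q}$. Apply the first part to get that $\mathcal{L}_R$ displays $\mathcal{Q}_R$. Since restriction preserves laminarity (the operation $X \mapsto X \cap A_R$ turns any pair of disjoint, nested, or equal sets into a pair with the same relation), $\mathcal{L}_R$ is laminar. Invoking the other direction of \cref{lem:definition} (applied with $[n]$ replaced by $A_R$ and $\mathcal{A}$ replaced by $\mathcal{A}_R$), we conclude that $\mathcal{Q}_R$ is compatible.

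I do not foresee a genuine obstacle; the only point requiring care is confirming that $X \cap A_R$ is indeed an $\mathcal{A}_R$-cut in the forward implication, which is forced by $a,a' \in A_i$ and $b,b' \in A_j$ lying on opposite sides of $X$, and the verification that the ``only if'' direction of \cref{lem:definition} transfers intact from the ambient setting to the restricted setting $(A_R, \mathcal{A}_R)$.
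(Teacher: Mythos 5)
Your proof is correct and is exactly the routine verification the paper leaves to the reader (the lemma is introduced with ``we can easily see''): the forward direction hinges on the observation that $a,a'\in A_i$ and $b,b'\in A_j$ lying on opposite sides of $X$ force $X\cap A_R$ to be an $\mathcal{A}_R$-cut, the reverse direction pulls the witness back to $\mathcal{F}$, and the compatibility claim follows by restricting a laminar family from \cref{lem:definition} and noting that intersection with $A_R$ preserves laminarity. No gaps.
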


Our algorithm for {\sc Displaying} is to construct an $\mathcal{A}_{[t]}$-cut family $\mathcal{F}^{(t)}$ displaying $\mathcal{Q}_{[t]}$
for $t = 2, 3, \dots, r$ in turn as follows.
\begin{itemize}
	\item First we obtain an $\mathcal{A}_{12}$-cut family $\mathcal{F}^{(2)}$ displaying $\mathcal{Q}_{12}$ by Algorithm~2.
	\item For $t \geq 3$, we extend an $\mathcal{A}_{[t-1]}$-cut family $\mathcal{F}^{(t-1)}$ displaying $\mathcal{Q}_{[t-1]}$ to an $\mathcal{A}_{[t]}$-cut family $\mathcal{F}^{(t)}$ displaying $\mathcal{Q}_{[t]}$ by Algorithm~3.
	In order to construct $\mathcal{F}^{(t)}$ in Algorithm~3,
	we use an $\mathcal{A}_{it}$-cut family $\mathcal{G}^{(i)}$ displaying $\mathcal{Q}_{it}$ for all $i \in [t-1]$.
	These $\mathcal{G}^{(i)}$ are obtained by Algorithm~2.
	\item We perform the above extension step from $t=3$ to $r$,
	and then obtain a desired $\mathcal{A}$-cut family $\mathcal{F} := \mathcal{F}^{(r)}$.
	This is described in Algorithm~4.
\end{itemize}

For nonempty $R \subseteq [r]$,
we define $\sim_R$ for $\mathcal{A}$-cuts by:
\begin{align*}
	X \sim_R Y \iff \{ \ave{X}_R \cap X, \ave{X}_R \setminus X \} = \{ \ave{Y}_R \cap Y, \ave{Y}_R \setminus Y \},
\end{align*}
where $\ave{X}_R := \ave{X} \cap A_R$ and $\ave{Y}_R := \ave{Y} \cap A_R$.
Note that $R$ can be a singleton.
We define a partial order relation $\prec$ in $\mathcal{A}$-cuts by:
$X \prec Y$ if $\ave{X} \subsetneq \ave{Y}$ and $\{ \ave{X} \cap X, \ave{X} \setminus X \} = \{ \ave{X} \cap Y, \ave{X} \setminus Y \}$.
Define $X \preceq Y$ by $X \prec Y$ or $X \sim Y$.
Note that, if $\{X\}$ displays $ab || a'b'$ and $X \preceq Y$,
then $\{Y\}$ also displays $ab || a'b'$.

We describe the extension step for $t = r$, i.e., from $\mathcal{F}^{(r-1)}$ to $\mathcal{F}^{(r)}$.
Algorithm~3 constructs a minimal laminarizable family $\mathcal{F}^{(t)}$ displaying $\mathcal{Q}_{[t]}$ from a minimal laminarizable family $\mathcal{F}^{(t-1)}$ displaying $\mathcal{Q}_{[t-1]}$.
It is noted that,
if $\mathcal{F}$ is laminarizable and $X \not\sim Y$ for all distinct $X, Y \in \mathcal{F}$,
then $|\mathcal{F}|$ is at most $2n = 2|A_{[r]}|$.
\begin{description}
	\item[Algorithm 3 (for extending $\mathcal{F}'$ to $\mathcal{F}$):]
	\item[Input:] A complete $\mathcal{A}$-partite quartet system $\mathcal{Q}$ and an $\mathcal{A}_{[r-1]}$-cut family $\mathcal{F}'$ displaying $\mathcal{Q}_{[r-1]}$,
	where $|\mathcal{F}'| \leq 2|A_{[r-1]}|$.
	\item[Output:] 
	Either detect the incompatibility of $\mathcal{Q}$,
	or obtain an $\mathcal{A}$-cut family $\mathcal{F}$ displaying $\mathcal{Q}$, where $|\mathcal{F}| \leq 2n = 2|A_{[r]}|$.
	\item[Step 1:] 
	For each $i \in [r-1]$,
	execute Algorithm~2 for $\mathcal{Q}_{ir}$.
	If Algorithm~2 returns ``$\mathcal{Q}_{ir}$ is not compatible'' for some $i \in [r-1]$,
	then output ``$\mathcal{Q}$ is not compatible''
	and stop.
	Otherwise,
	obtain $\mathcal{G}^{(i)}$ for all $i \in [r-1]$.
	Let $\mathcal{F} := \emptyset$.
	\item[Step 2:] If $\mathcal{F}' = \emptyset$, update as $\mathcal{F} \leftarrow \mathcal{F} \cup \bigcup_{i \in [r-1]} \mathcal{G}^{(i)}$,
	and go to Step~3.
	Otherwise, do the following:
	Take any $X' \in \mathcal{F}'$. 
	Suppose $\ave{X'} = A_S$ for some $S \subseteq [r-1]$.
	Let $\mathcal{F}^{X'}$ be the set of maximal $\mathcal{A}$-cuts $Y$ with respect to $\prec$
	satisfying the following:
	\begin{itemize}
		\item There are $R \subseteq S$ and $X_i \in \mathcal{G}^{(i)}$ for $i \in R$ such that $\ave{Y} = A_{R \cup \{r\}}$, $Y \sim_{R} X'$, and $Y \sim_{i r} X_i$ for all $i \in R$.
	\end{itemize}
	
	Then update as
	$\mathcal{F} \leftarrow \mathcal{F} \cup \{X'\} \cup \mathcal{F}^{X'}$ and $\mathcal{F}' \leftarrow \mathcal{F}' \setminus \{ X'\}$,
	and go to Step~2.
	\item[Step 3:]
	Update as
	\begin{align*}
	\mathcal{F} \leftarrow \textrm{the set of maximal elements in $\mathcal{F}$ with respect to $\prec$}.
	\end{align*}
	If $|\mathcal{F}| \leq 2n$,
	then output $\mathcal{F}$.
	Otherwise, output ``$\mathcal{Q}$ is not compatible.''
	\qqed
\end{description}

We show the correctness of Algorithm~3.
We first introduce the following lemma that gives a sufficient condition for the non-laminarizability of an $\mathcal{A}$-cut family.
For simplicity, we denote $X \sim_{\{i\}} Y$, $X \sim_{\{i,j\}} Y$, and $X \sim_{\{i,j,k\}} Y$ by $X \sim_{i} Y$, $X \sim_{ij} Y$, and $X \sim_{ijk} Y$,
respectively.
\begin{lem}\label{lem:nonlaminar}
	Suppose that $X, Y, Z$ are $\mathcal{A}$-cuts with $\ave{X} \supseteq A_{ij}$, $\ave{Y} \supseteq A_{ik}$, and $\ave{Z} \supseteq A_{jk}$ for some distinct $i,j,k \in [r]$.
	If there is an $\mathcal{A}$-cut $W$ such that $X \sim_{ij} W$, $Y \sim_{ik} W$,
	$Z \sim_{jk} W$, and $X,Y,Z \not\sim_{ijk} W$,
	then $\{X, Y, Z\}$ is not laminarizable.
\end{lem}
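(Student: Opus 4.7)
\medskip
\noindent\textbf{Proof plan.} My plan is to argue by contradiction. Suppose $\{X, Y, Z\}$ is laminarizable, so there exist a laminar $\mathcal{A}$-cut family $\{X^*, Y^*, Z^*\}$ with $X^* \sim X$, $Y^* \sim Y$, $Z^* \sim Z$. The first step is to transfer all hypotheses onto the starred sets. Observe that $\sim$ refines $\sim_R$ for every $R \subseteq [r]$: if $X \sim X^*$ then $\ave{X} = \ave{X^*}$ and $X^* \cap \ave{X}$ is either $X \cap \ave{X}$ or $\ave{X} \setminus X$, which when intersected with $\ave{X}_R = \ave{X^*}_R$ yields $X \sim_R X^*$. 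Combined with the hypotheses, this gives $X^* \sim_{ij} W$, $Y^* \sim_{ik} W$, $Z^* \sim_{jk} W$, and $X^*, Y^*, Z^* \not\sim_{ijk} W$.

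Next I normalize signs. Replacing any of $X^*, Y^*, Z^*$ by its complement in $[n]$ preserves both its $\sim$-class and its $\sim_{ijk}$-class, so I may assume
\begin{align*}
X^* \cap A_{ij} = W \cap A_{ij},\qquad Y^* \cap A_{ik} = W \cap A_{ik}, \qquad Z^* \cap A_{jk} = W \cap A_{jk}.
\end{align*}
Write $W_\ell := W \cap A_\ell$ for $\ell \in \{i, j, k\}$; each is a proper nonempty subset of $A_\ell$ since $\ave{W} \supseteq A_{ij} \cup A_{ik} \cup A_{jk} = A_{ijk}$. I now restrict to $A_{ijk}$, writing $\tilde X := X^* \cap A_{ijk}$ and similarly $\tilde Y, \tilde Z$; restriction preserves laminarity, so $\{\tilde X, \tilde Y, \tilde Z\}$ is laminar in $A_{ijk}$. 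By construction,
\begin{align*}
\tilde X \cap A_i = \tilde Y \cap A_i = W_i, \qquad \tilde X \cap A_j = \tilde Z \cap A_j = W_j, \qquad \tilde Y \cap A_k = \tilde Z \cap A_k = W_k,
\end{align*}
while from $X^*, Y^*, Z^* \not\sim_{ijk} W$ together with the above sign matches one deduces the ``forbidden inequalities'' $\tilde X \cap A_k \neq W_k$, $\tilde Y \cap A_j \neq W_j$, and $\tilde Z \cap A_i \neq W_i$.

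Finally, the contradiction is combinatorial. Each pair among $\tilde X, \tilde Y, \tilde Z$ has nonempty intersection, since they share $W_i$, $W_j$, or $W_k$ in the common coordinate; so laminarity of $\{\tilde X, \tilde Y, \tilde Z\}$ forces a chain. I will then check the six possible total orderings and show that, in each, two successive containments sandwich some third-group coordinate against $W_i$, $W_j$, or $W_k$ and force equality there, contradicting one of the forbidden inequalities. As a representative case, in $\tilde X \subseteq \tilde Y \subseteq \tilde Z$ the inclusions give $W_j = \tilde X \cap A_j \subseteq \tilde Y \cap A_j \subseteq \tilde Z \cap A_j = W_j$, so $\tilde Y \cap A_j = W_j$, contrary to $\tilde Y \cap A_j \neq W_j$; the other five orderings are handled symmetrically. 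The main obstacle is just being systematic over the six orderings, but the argument is uniform and in particular does not depend on whether $A_k \subseteq \ave{X}$ (so that $\tilde X \cap A_k$ is a proper nontrivial subset) or $A_k \not\subseteq \ave{X}$ (so that $\tilde X \cap A_k \in \{\emptyset, A_k\}$), because in both situations the chain of inclusions forces precisely the forbidden equality.
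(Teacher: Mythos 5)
Your argument is essentially the paper's proof: pass to laminar representatives, normalize each set's restriction on its two controlled blocks to agree with $W$, observe that the three pairwise-shared blocks give pairwise nonempty intersections and hence a chain, and sandwich the middle element on the block shared by the two outer sets to violate one of the forbidden inequalities. Your derivation of the forbidden inequalities and the six-ordering check are correct, and the sandwich step is, if anything, organized more cleanly than in the paper. The one step whose stated justification does not suffice is the sign normalization: replacing $X^*$ by $[n]\setminus X^*$ does preserve its $\sim$-class and its $\sim_{ijk}$-class, but it can destroy laminarity of $\{X^*,Y^*,Z^*\}$ (already $\{[n]\setminus X^*,\, Y^*\}$ fails to be laminar whenever $\emptyset\neq X^*\subsetneq Y^*$ and $X^*\cup Y^*\neq[n]$), and your subsequent chain argument requires the normalized family to be laminar. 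The paper elides this point identically (``we can assume $\dots$''), but since you supply an explicit reason it should be the right one: call $X^*$ \emph{positive} if $X^*\cap A_{ij}=W\cap A_{ij}$ and \emph{negative} if $X^*\cap A_{ij}=A_{ij}\setminus W$, and likewise for $Y^*$ and $Z^*$ on their blocks. Two of the sets with equal sign have equal, nonempty traces on their shared block $A_\ell$, hence intersect and are nested; two with opposite signs have complementary nonempty traces on $A_\ell$, hence neither contains the other and they are disjoint by laminarity. Consequently, complementing exactly the negative sets sends nested pairs to nested pairs and disjoint pairs to pairs with one side contained in the complement of the other, so the family stays laminar and the normalization is legitimate. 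With that repair your proof is complete and coincides in substance with the paper's.
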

\begin{proof}
	We show the contrapositive;
	suppose that $\{X, Y, Z\}$ is laminarizable,
	where $X \sim_{ij} W$, $Y \sim_{ik} W$, $Z \sim_{jk} W$, and $Y, Z \not\sim_{ijk} W$ hold for some $\mathcal{A}$-cut $W$ and some distinct $i,j,k \in [r]$.
	We prove $X \sim_{ijk} W$.
	
	By the laminarizability of $\{X, Y, Z\}$,
	there is a laminar $\mathcal{A}$-cut family equivalent to $\{X, Y, Z\}$.
	We also denote it by $\{X, Y, Z\}$,
	that is, $\{X, Y, Z\}$ is laminar.
	Moreover, by $X \sim_{ij} W$, $Y \sim_{ik} W$, and $Z \sim_{jk} W$,
	we can assume $X \cap A_{ij} = W \cap A_{ij}$, $Y \cap A_{ik} = W \cap A_{ik}$, and $Z \cap A_{jk} = W \cap A_{jk}$.
	These imply $X \cap A_i = Y \cap A_i (\neq \emptyset)$, $X \cap A_j = Z \cap A_j (\neq \emptyset)$, and $Y \cap A_k = Z \cap A_k (\neq \emptyset)$.
	By $Y \cap Z \neq \emptyset$ and the laminarity of $\{Y, Z\}$,
	either $Y \subsetneq Z$ or $Y \supsetneq Z$ holds.
	Without loss of generality,
	we assume $Y \subsetneq Z$.
	
	By $Z \not\sim_{ijk} W$ and $Y \subsetneq Z$,
	it must hold $Z \cap A_i \supsetneq Y \cap A_i = X \cap A_i$.
	Thus we have $Z \cap A_{ij} \supsetneq X \cap A_{ij}$.
	By $Y \not\sim_{ijk} W$ and $Y \subsetneq Z$,
	it must hold $Y \cap A_j \subsetneq Z \cap A_j = X \cap A_j$.
	Thus we have $Y \cap A_{ij} \subsetneq X \cap A_{ij}$.
	Then, by the laminarity of $\{X, Y, Z\}$,
	we obtain $Z \supsetneq X \supsetneq Y$.
	Since $Y \cap A_k = Z \cap A_k$ holds,
	we have $X \cap A_k = Y \cap A_k = Z \cap A_k$.
	This implies $X \sim_{ijk} W$.
\end{proof}

As a compatible complete bipartite quartet system (\cref{prop:r=2 unique}),
a compatible complete multipartite quartet system $\mathcal{Q}$ induces the following uniqueness of a laminarizable family displaying $\mathcal{Q}$,
which ensures the validity of our proposed algorithm.
\begin{prop}\label{prop:r>=3 unique}
	Suppose that a complete $\mathcal{A}$-partite quartet system $\mathcal{Q}$ is compatible.
	Then a minimal laminarizable $\mathcal{A}$-cut family $\mathcal{F}$ displaying $\mathcal{Q}$ is uniquely determined up to $\sim$.
\end{prop}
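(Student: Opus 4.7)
The plan is to adapt the strategy of \cref{prop:r=2 unique} to higher rank, with \cref{lem:nonlaminar} handling the new triple-wise obstructions. Let $\mathcal{F}$ and $\mathcal{F}'$ be two minimal laminarizable $\mathcal{A}$-cut families displaying $\mathcal{Q}$; after replacing each by an equivalent laminar family, we may assume both are laminar. The target is $\mathcal{F}/{\sim} = \mathcal{F}'/{\sim}$; by symmetry it suffices to show that every $X \in \mathcal{F}$ admits some $X' \in \mathcal{F}'$ with $X \sim X'$.

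First, I would reduce to the bipartite case. For every pair $\{i,j\} \subseteq [r]$, \cref{lem:restriction} gives that $\mathcal{F}_{\{i,j\}}$ and $\mathcal{F}'_{\{i,j\}}$ are $\mathcal{A}_{\{i,j\}}$-cut families both displaying $\mathcal{Q}_{ij}$, and each is laminarizable since the restriction of a laminar family is laminar. Passing to minimal $\sim_{ij}$-representatives and invoking \cref{prop:r=2 unique} yields $\mathcal{F}_{\{i,j\}}/{\sim_{ij}} = \mathcal{F}'_{\{i,j\}}/{\sim_{ij}}$. Hence every $X \in \mathcal{F}$ with $\ave{X} \supseteq A_i \cup A_j$ admits at least one $X' \in \mathcal{F}'$ with $X \sim_{ij} X'$.

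Next, I would lift these pairwise agreements to a full match. Fix $X \in \mathcal{F}$ with $\ave{X} = A_S$ and pick $T \subseteq S$ maximal with $|T| \geq 2$ such that some $X^* \in \mathcal{F}'$ satisfies $X^* \sim_T X$; such $T$ exists by the bipartite step. Suppose for contradiction $T \subsetneq S$, and pick $l \in S \setminus T$ together with $i, j \in T$; take $Y, Z \in \mathcal{F}'$ with $Y \sim_{il} X$ and $Z \sim_{jl} X$. The triple $\{X^*, Y, Z\}$ lies in the laminar family $\mathcal{F}'$ and is hence laminarizable, so \cref{lem:nonlaminar} (contrapositive) applied with $W = X$ forces at least one of $X^*, Y, Z$ to satisfy $\sim_{ijl} X$. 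If the witness is $X^*$, then combining with $X^* \sim_T X$ gives $X^* \sim_{T \cup \{l\}} X$, contradicting the maximality of $T$. If instead the witness is $Y$ or $Z$, one replaces $X^*$ by this new candidate and argues, via the laminarity of $\mathcal{F}'$ together with further invocations of \cref{lem:nonlaminar} on triples involving the remaining parts in $T \setminus \{i, j\}$, that the new candidate also agrees with $X$ on those parts, again contradicting maximality. Hence $T = S$. An analogous argument using pairs $\{i, m\}$ with $m \in [r] \setminus S$ and the quartets in $\mathcal{Q}_{im}$ that $X$ itself displays (or fails to display) rules out the possibility $\ave{X^*} \supsetneq A_S$, giving $\ave{X^*} = A_S$ and therefore $X^* \sim X$.

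The principal obstacle lies in the replacement step: \cref{lem:nonlaminar} only guarantees that one of three candidates is triple-compatible with $X$, and when the witness is $Y$ or $Z$ rather than $X^*$, transferring the $\sim_T$-agreement enjoyed by $X^*$ to the new candidate requires careful tracking of which ``side'' of each bipartition is chosen, combined with iterated applications of \cref{lem:nonlaminar} to further triples bridging $T \setminus \{i, j\}$ and $\{i, j, l\}$. The analogous support-matching argument for $\ave{X^*} = A_S$ involves similar care, since bipartite uniqueness only constrains the restrictions of elements of $\mathcal{F}'$ to each $A_{im}$ rather than excluding unwanted parts from their global supports.
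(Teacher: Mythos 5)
Your overall strategy --- reduce to bipartite uniqueness via \cref{lem:restriction} and \cref{prop:r=2 unique}, then use \cref{lem:nonlaminar} to glue the pairwise agreements into a global match --- is the same as the paper's, but the crucial gluing step is left as an acknowledged sketch, and as written it does not close. The contrapositive of \cref{lem:nonlaminar} only tells you that \emph{one} of $X^*, Y, Z$ satisfies $\sim_{ijl} X$; when that witness is $Y$ or $Z$ you obtain agreement on $\{i,j,l\}$ only, which for $|T| \geq 3$ does not contradict the maximality of $T$, and your proposed remedy (iterating over further triples bridging $T \setminus \{i,j\}$) is exactly the part that needs a proof --- each new invocation of the lemma can again point at a third, previously unexamined element of $\mathcal{F}'$, so it is not clear the process converges to a single set agreeing with $X$ on all of $T \cup \{l\}$. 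The paper avoids this branching entirely by an extremal choice: working with $W = X'$, it picks $Y$ maximizing $I := \{ i \in R \mid Y \sim_{ik} X'\}$, deduces $R \setminus I \neq \emptyset$ from the maximality of $R$, and then chooses $j \in R \setminus I$ and $i \in I$ with $Z \not\sim_{ik} X'$, so that \emph{all three} of $X, Y, Z$ fail $\sim_{ijk} X'$. This permits applying \cref{lem:nonlaminar} in the forward direction to conclude that the triple is not laminarizable --- a direct contradiction with no case analysis on which set is the ``witness.''

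There is a second, structural difference that also hides a gap. You aim to show that \emph{every} $X \in \mathcal{F}$ has a $\sim$-match in $\mathcal{F}'$, which forces you to prove $\ave{X^*} = A_S$ exactly; your argument for excluding $\ave{X^*} \supsetneq A_S$ is only gestured at, and it is genuinely delicate because a set with strictly larger support displays strictly more quartets, so a priori one minimal family could prefer the larger set where the other keeps the smaller one. The paper sidesteps both issues by proving instead that $\mathcal{G} := \{ X \in \mathcal{F} \mid \exists X' \in \mathcal{F}' \textrm{ with } X \sim X'\}$ already displays $\mathcal{Q}$: for each quartet it only needs \emph{some} matched pair in $\mathcal{F}(W) \times \mathcal{F}'(W')$, and minimality of $\mathcal{F}$ then forces $\mathcal{G} = \mathcal{F}$. (The case $\ave{X'} \supsetneq \ave{X}$ is absorbed automatically there, since any $k$ with $A_k \subseteq \ave{X'} \setminus \ave{X}$ satisfies $X \not\sim_k X'$ and feeds the same triple construction.) I would recommend restructuring your argument along these lines; as it stands, the replacement/iteration step is a real gap, not a routine verification.
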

\begin{proof}
	Suppose that $\mathcal{F}$ and $\mathcal{F}'$ are laminarizable $\mathcal{A}$-cut families displaying $\mathcal{Q}$.
	Let $\mathcal{G} := \{X \in \mathcal{F} \mid \exists X' \in \mathcal{F}' \textrm{ satisfying } X \sim X' \}$.
	It suffices to show that $\mathcal{G}$ is also a laminarizable family displaying $\mathcal{Q}$.
	The laminarizability of $\mathcal{G}$ is clear by $\mathcal{G} \subseteq \mathcal{F}$.
	Hence we show that $\mathcal{G}$ displays $\mathcal{Q}$.
	
	Take any $ab || a' b' \in \mathcal{Q}$.
	Assume that $a, a' \in A_1$ and $b, b' \in A_2$.
	By \cref{lem:restriction},
	for all distinct $i, j \in [r]$,
	$\mathcal{F}_{ij}$ and $\mathcal{F}'_{ij}$ display $\mathcal{Q}_{ij}$.
	By \cref{prop:r=2 unique} and the laminarizability of $\mathcal{F}_{ij}$ and $\mathcal{F}'_{ij}$,
	it holds that $\mathcal{F}_{ij} \sim_{ij} \mathcal{F}'_{ij}$, implying $\mathcal{F}_{ij} \sim \mathcal{F}'_{ij}$.
	In particular,
	$\mathcal{F}_{12}$ and $\mathcal{F}'_{12}$ display $\mathcal{Q}_{12}$, and $\mathcal{F}_{12} \sim \mathcal{F}'_{12}$.
	By $ab || a'b' \in \mathcal{Q}$,
	there are $W \in \mathcal{F}_{12}$ and $W' \in \mathcal{F}'_{12}$ with $W \sim W'$
	such that $\{W\}$ and $\{W'\}$ display $ab || a'b'$.
	We denote by $\mathcal{F}(W)$ (resp. $\mathcal{F}'(W')$) the set of $X \in \mathcal{F}$ (resp. $X' \in \mathcal{F}'$) such that $X \sim_{12} W$ (resp. $X' \sim_{12} W'$).
	Note that $\mathcal{F}(W)$ and $\mathcal{F}'(W')$ are nonempty.
	It suffices to show that there are $X \in \mathcal{F}(W)$ and $X' \in \mathcal{F}'(W')$ satisfying $X \sim X'$.
	Then $\mathcal{G}$ contains $X$,
	implying that $\mathcal{G}$ displays $ab || a'b'$.

	Suppose, to the contrary, that there is no such a pair.
	Let $R$ be a maximal subset of $[r]$ such that $X \sim_R X'$ for some $X \in \mathcal{F}(W)$ and $X' \in \mathcal{F}'(W')$,
	and take such $X$ and $X'$.
	Note that $|R| \geq 2$ since $R \supseteq \{1,2\}$.
	Since $X \not\sim X'$,
	there is $k \in [r] \setminus R$ such that $\ave{X'} \supseteq A_{k}$ and $X \not\sim_{k} X'$.
	Furthermore, by $X' \cap A_{i k} \in \mathcal{F}'_{i k}$ and $\mathcal{F}_{i k} \sim \mathcal{F}'_{i k}$,
	for each $i \in R$
	there is $Y \in \mathcal{F}$ with $Y \sim_{i k} X'$.
	
	Take $Y \in \mathcal{F}$ so that $I := \{i \in R \mid Y \sim_{i k} X' \}$ is maximal.
	By the maximality of $R$,
	we have $R \setminus I \neq \emptyset$;
	otherwise $X' \sim_{R \cup \{k\}} Y$ and $Y  \in \mathcal{F}(W)$, contradicting the maximality of $R$.
	Choose arbitrary $j \in R \setminus I$.
	Then there is $Z \in \mathcal{F}$ with $Z \sim_{j k} X'$.
	Furthermore, by the maximality of $I$,
	there is $i \in I$ such that $Z \not\sim_{i k} X'$.
	%Hence $Z \not\sim_{i} Y \sim_{i} X'$ holds.
	Thus we have $X \sim_{ij} X'$, $Y \sim_{i k} X'$, $Z \sim_{j k} X'$, and $X, Y, Z \not\sim_{ijk} X'$.
	Hence, by \cref{lem:nonlaminar},
	$\{X, Y, Z\} \subseteq \mathcal{F}$ is not laminarizable,
	contradicting the laminarizability of $\mathcal{F}$.
\end{proof}

We are now ready to prove the validity of Algorithm~3.
\begin{prop}\label{prop:algo 3}
\begin{description}
    \item[{\rm (1)}]
    If Algorithm~{\rm 3} outputs $\mathcal{F}$,
	then $\mathcal{F}$ displays $\mathcal{Q}$.
	In addition,
	if $\mathcal{Q}$ is compatible and $\mathcal{F}'$ is a minimal laminarizable $\mathcal{A}_{[r-1]}$-cut family displaying $\mathcal{Q}_{[r-1]}$,
	then $\mathcal{F}$ is a minimal laminarizable $\mathcal{A}$-cut family.
    \item[{\rm (2)}]
    The running-time of Algorithm~{\rm 3} is $O(n^4)$.
\end{description}
\end{prop}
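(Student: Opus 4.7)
The plan is to verify the two assertions separately, organising the argument around the decomposition $\mathcal{Q} = \mathcal{Q}_{[r-1]} \cup \bigcup_{i \in [r-1]} \mathcal{Q}_{ir}$ and using the facts that $\mathcal{F}'$ displays $\mathcal{Q}_{[r-1]}$ (by hypothesis) and each $\mathcal{G}^{(i)}$ displays $\mathcal{Q}_{ir}$ (by \cref{prop:algo 2}).

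For the displaying claim, I would prove both directions of the defining iff. Given $ab||a'b' \in \mathcal{Q}$ with $a,a'\in A_i$ and $b,b'\in A_j$: if $\{i,j\}\subseteq[r-1]$, some $X' \in \mathcal{F}'$ already displays it, and because $A_i\cup A_j\subseteq\ave{X'}$ and $\prec$ preserves the restriction to $\ave{X'}$, whichever $\prec$-maximal $W\succeq X'$ survives Step~3 still displays the quartet; the case $j=r$ is analogous with $X_i\in\mathcal{G}^{(i)}$ in place of $X'$. For the converse, each surviving $X\in\mathcal{F}$ was originally in $\mathcal{F}'$, in some $\mathcal{G}^{(i)}$, or in some $\mathcal{F}^{X'}$, and in every case the defining constraints ($\{X\}$ agreeing with $X'$ under $\sim_R$ and with some $X_i$ under $\sim_{ir}$) force $\{X\}$'s action on each bipartite piece $A_j\cup A_k\subseteq\ave{X}$ to coincide with that of a set already known to display the corresponding sub-system of $\mathcal{Q}$.

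For the minimality claim, assume $\mathcal{Q}$ is compatible. By \cref{lem:definition} some laminar family displays $\mathcal{Q}$, and \cref{prop:r>=3 unique} then yields a unique (up to $\sim$) minimal laminarizable $\mathcal{F}^*$ displaying $\mathcal{Q}$. I would establish $\mathcal{F}/{\sim}=\mathcal{F}^*/{\sim}$. For $\mathcal{F}^*/{\sim}\subseteq\mathcal{F}/{\sim}$, take $Y\in\mathcal{F}^*$ and case on $\ave{Y}$: if $A_r\not\subseteq\ave{Y}$ then $Y\sim(Y\cap A_{[r-1]})$, which by \cref{prop:r>=3 unique} applied to $\mathcal{Q}_{[r-1]}$ and the hypothesis on $\mathcal{F}'$ is $\sim$-equivalent to a unique $X'\in\mathcal{F}'\subseteq\mathcal{F}$; if $\ave{Y}=A_i\cup A_r$ then $Y\sim(Y\cap A_{ir})\in\mathcal{G}^{(i)}\subseteq\mathcal{F}$; and if $\ave{Y}=A_{R\cup\{r\}}$ with $|R|\geq 2$ then $Y$ satisfies the defining conditions of $\mathcal{F}^{X'}$ for the unique $X'\in\mathcal{F}'$ with $\ave{X'}=A_R$ (using that $\sim$ preserves $\ave{\cdot}$), so a $\prec$-maximal representative of $[Y]$ lies in $\mathcal{F}^{X'}\subseteq\mathcal{F}$.

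For the converse inclusion, suppose toward a contradiction that some $X\in\mathcal{F}$ surviving Step~3 has $[X]\notin\mathcal{F}^*/{\sim}$. The preceding paragraph provides a sub-family $\mathcal{F}^{**}\subseteq\mathcal{F}$ with $\mathcal{F}^{**}\sim\mathcal{F}^*$; then $\mathcal{F}^{**}\cup\{X\}$ still displays $\mathcal{Q}$ but, by the uniqueness in \cref{prop:r>=3 unique}, cannot be laminarizable. I would then apply \cref{lem:nonlaminar} to extract three sets in $\mathcal{F}^{**}\cup\{X\}$ witnessing non-laminarizability; since no such triple inside $\mathcal{F}^{**}$ alone can obstruct laminarity (else $\mathcal{F}^*$ would), the triple must involve $X$, and inspecting the constraints that forced $X$ into $\mathcal{F}^{X'}$ or $\mathcal{G}^{(i)}$, together with the $\prec$-maximality operation in Step~3, yields the desired contradiction. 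Laminarizability of $\mathcal{F}$ then follows from $\mathcal{F}/{\sim}=\mathcal{F}^*/{\sim}$; minimality (no $\sim$-duplicates) follows because $\sim$-equivalent $\mathcal{A}$-cuts share $\ave{\cdot}$ and the construction instantiates a single canonical representative per $\sim$-class across the sources; and the size bound $|\mathcal{F}|\leq 2n$ is automatic from $|\mathcal{F}^*|\leq 2n$. The hardest step is this final contradiction — pinpointing the witnesses required by \cref{lem:nonlaminar} for a hypothetical spurious $X\in\mathcal{F}^{X'}$ demands a careful case analysis that leverages the $\prec$-maximality built into the definition of $\mathcal{F}^{X'}$.
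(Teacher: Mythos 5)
Your outline of the displaying claim and your observation that everything reduces to comparing $\mathcal{F}$ with the unique minimal laminarizable $\mathcal{F}^*$ are in the right spirit, but the core of the minimality argument has a genuine gap. You propose to prove $\mathcal{F}/{\sim}\supseteq\mathcal{F}^*/{\sim}$ and $\mathcal{F}/{\sim}\subseteq\mathcal{F}^*/{\sim}$ separately, and for the second inclusion you argue: if some surviving $X$ had no $\sim$-partner in $\mathcal{F}^*$, then $\mathcal{F}^{**}\cup\{X\}$ ``cannot be laminarizable by the uniqueness in \cref{prop:r>=3 unique},'' after which you would ``apply \cref{lem:nonlaminar} to extract three sets witnessing non-laminarizability.'' Both steps fail. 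First, \cref{prop:r>=3 unique} asserts uniqueness of the \emph{minimal} laminarizable family displaying $\mathcal{Q}$; a laminarizable family displaying $\mathcal{Q}$ need not be minimal, so adding one extra set to $\mathcal{F}^{**}$ does not by itself contradict anything -- you would only conclude that $\mathcal{F}$ fails to be \emph{minimal}, not that it fails to be laminarizable, and you have no contradiction to work with. Second, \cref{lem:nonlaminar} is a \emph{sufficient} condition for non-laminarizability; it cannot be run backwards to ``extract'' a witness triple from a family already assumed non-laminarizable. So the engine that is supposed to produce the final contradiction does not exist in your write-up, and you explicitly defer the case analysis that would have to replace it.

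The paper's proof avoids both problems by proving only the single inclusion $\mathcal{F}/{\sim}\subseteq\mathcal{F}^*/{\sim}$ (the reverse inclusion, laminarizability, and minimality then all follow at once from the minimality of $\mathcal{F}^*$, since $\mathcal{F}$ is already known to display $\mathcal{Q}$), and by using \cref{lem:nonlaminar} in its correct direction. Concretely: for $X\in\mathcal{F}$ with $\ave{X}\subseteq A_{[r-1]}$ one has $X\in\mathcal{F}'$ and $\mathcal{F}'/{\sim}\subseteq\mathcal{F}^*_{[r-1]}/{\sim}$, and the Step-3 pruning rules out $X\prec X^*$; for $X$ with $\ave{X}\supseteq A_r$ and no $\sim$-partner in $\mathcal{F}^*$, one chooses $Y^*\in\mathcal{F}^*$ maximizing the index set $I\subseteq R$ with $Y^*\sim_{I\cup\{r\}}X$ and splits on $I\neq R$ versus $I=R$: in the first case one assembles an explicit triple $\{X^*,Y^*,Z^*\}\subseteq\mathcal{F}^*$ satisfying the \emph{hypotheses} of \cref{lem:nonlaminar}, contradicting laminarizability of $\mathcal{F}^*$; in the second case a similar maximal-agreement argument produces a triple $\{X',Y',Z'\}\subseteq\mathcal{F}'$, contradicting laminarizability of $\mathcal{F}'$ (this is where the hypothesis that $\mathcal{F}'$ is a \emph{minimal laminarizable} family is used). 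This forward construction of the obstruction triple is precisely the ``hardest step'' your proposal leaves open, and it is the substance of the proof.
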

\begin{proof}
	(1).
	Suppose that Algorithm~3 outputs $\mathcal{F}$.
	It can be easily seen that
	$\mathcal{F}_{ij} \sim \mathcal{F}'_{ij}$ for all distinct $i, j \in [r-1]$,
	and $\mathcal{F}_{ir} \sim \mathcal{G}^{(i)}$ for all $i \in [r-1]$.
	This implies that $\mathcal{F}$ displays $\mathcal{Q}_{[r-1]} \cup \bigcup_{i \in [r-1]} \mathcal{Q}_{ir} = \mathcal{Q}$.
	
	Suppose that $\mathcal{Q}$ is compatible, and that $\mathcal{F}'$ is a minimal laminarizable $\mathcal{A}$-cut family displaying $\mathcal{Q}_{[r-1]}$.
	Let $\mathcal{F}^*$ be a minimal laminarizable $\mathcal{A}$-cut family displaying $\mathcal{Q}$.
	It suffices to show that $\mathcal{F} / {\sim} \subseteq \mathcal{F}^* / {\sim}$.
	Indeed, such $\mathcal{F}$ is laminarizable,
	and the minimality of $\mathcal{F}^*$ implies $\mathcal{F} \sim \mathcal{F}^*$.
	Note that, by \cref{prop:r=2 unique} and \cref{lem:restriction},
	it holds that $\mathcal{F}^*_{ij} \sim_{ij} \mathcal{F}'_{ij}$ for all distinct $i, j \in [r]$ and $\mathcal{F}^*_{ir} \sim_{ir} \mathcal{G}^{(i)}$ for all $i \in [r-1]$.

	Take any $X \in \mathcal{F}$.
	There are two cases (i) $\ave{X} \subseteq A_{[r-1]}$ and (ii) $\ave{X} \supseteq A_r$.
	We show in the both cases that there is $X^* \in \mathcal{F}^*$ satisfying $X \sim X^*$.
	By \cref{lem:restriction},
	$\mathcal{F}^*_{[r-1]}$ is a laminarizable family displaying $\mathcal{Q}_{[r-1]}$.
	Hence, by \cref{prop:r>=3 unique} and the minimality of $\mathcal{F}'$,
	we have $\mathcal{F}' / {\sim} \subseteq \mathcal{F}^*_{[r-1]} / {\sim}$.
	
	(i). $\ave{X} \subseteq A_{[r-1]}$ implies $X \in \mathcal{F}'$.
	Hence, by $\mathcal{F}' / {\sim} \subseteq \mathcal{F}^*_{[r-1]} / {\sim}$, there is $X^* \in \mathcal{F}^*$ such that $X \sim_{[r-1]} X^*$.
	Suppose, to the contrary, that such $X^*$ satisfies $\ave{X^*} \supseteq A_r$, i.e., $X \prec X^*$.
	Then there must be $X^* \cap A_{ir} \in \mathcal{F}^*_{ir}$ for every $i \in [r-1]$ with $\ave{X^*} \supseteq A_i$.
	By $\mathcal{F}^*_{ir} \sim \mathcal{G}^{(i)}$ for all $i \in [r-1]$, $\mathcal{F}^X$ contains $Y$ satisfying $Y \succeq X^* (\succ X)$ in Step~2.
	Hence $X$ is deleted from $\mathcal{F}$ in Step~3, contradicting $X \in \mathcal{F}$.
	Thus $X \sim X^* \in \mathcal{F}^*$ holds.
	
	(ii).
	Suppose, to the contrary, that there is no $X^* \in \mathcal{F}^*$ satisfying $X \sim X^*$.
	Let $X'$ be an element in $\mathcal{F}'$ that is used to construct $X$ in Step~2.
	By $\mathcal{F}' / {\sim} \subseteq \mathcal{F}^*_{[r-1]} / {\sim}$,
	there is $X^* \in \mathcal{F}^*$ with $X^* \sim_{[r-1]} X'$.
	By the maximality of $X$ and $X^* \not\sim X$,
	we have $X^* \not\sim_r X$.
	Let $R \subseteq [r-1]$ be the set of indices with $\ave{X} = A_{R \cup \{r\}}$.
	Take $Y^* \in \mathcal{F}^*$ so that the set $I \subseteq R$ with $Y^* \sim_{I \cup \{r\}} X$ is maximal.
	Note that $I \neq \emptyset$, since it holds that $X \cap A_{i r} \in \mathcal{G}^{(i)}$ and $\mathcal{G}^{(i)} \sim \mathcal{F}^*_{ir}$ for $i \in R$.
	
	Assume that $I \neq R$.
	Choose arbitrary $j \in R \setminus I$.
	Then there is $Z^* \in \mathcal{F}^*$ with $Z^* \sim_{j r} X$.
	Furthermore, by the maximality of $I$,
	there is $i \in I$ such that $Z^* \not\sim_{i r} X$.
%	Hence $Z^* \not\sim_{i} Y^* \sim_{i} X$ holds.
	Hence we have $X^* \sim_{ij} X' \sim_{ij} X$, $Y^* \sim_{ir} X$, $Z^* \sim_{j r} X$, and $X^*, Y^*, Z^* \not\sim_{ijr} X$.
	Thus, by \cref{lem:nonlaminar},
	$\{X^*, Y^*, Z^*\} \subseteq \mathcal{F}^*$ is not laminarizable,
	contradicting the laminarizability of $\mathcal{F}^*$.
	
	Assume that $I = R$.
	Since $Y^* \not\sim X$,
	there is $k \in [r-1] \setminus R$ with $\ave{Y^*} \supseteq A_{k}$.
	For such $k$, we have $X' \not\sim_{k} Y^*$;
	otherwise, by $Y^* \cap A_{kr} \in \mathcal{F}^*_{kr}$ and $\mathcal{F}^*_{k r} \sim \mathcal{G}^{(k)}$,
	$Y$ satisfying $Y \succeq Y^* \cap A_{R \cup \{k, r\}} (\succ X)$ is constructed in Step~2 for $X'$,
	contradicting the maximality of $X$.
	Take $Y' \in \mathcal{F}'$ so that the set $I' \subseteq R$ with $Y' \sim_{I' \cup \{k\}} Y^*$ is maximal.
	Note that $I' \neq \emptyset$, since it holds that $Y^* \cap A_{i k} \in \mathcal{F}^*_{i k}$ and $\mathcal{F}^*_{i k} \sim \mathcal{F}'_{i k}$ for $i \in R$.
	Furthermore $I' \neq R$, since otherwise,
	by $Y^* \cap A_{kr} \in \mathcal{F}^*_{kr}$ and $\mathcal{F}^*_{kr} \sim \mathcal{G}^{(k)}$,
	$Y$ satisfying $Y \succeq Y^* \cap A_{R \cup \{k, r\}} (\succ X)$ is constructed in Step~2 for $Y'$,
	contradicting the maximality of $X$.
	
	Choose arbitrary $j \in R \setminus I'$.
	Then there is $Z' \in \mathcal{F}'$ with $Z' \sim_{j k} Y^*$.
	Furthermore, by the maximality of $I'$,
	there is $i \in I'$ such that $Z' \not\sim_{i k} Y^*$.
%	Hence $Z' \not\sim_{i} Y' \sim_{i} Y^*$ holds.
	Hence we have $X' \sim_{ij} Y^*$, $Y' \sim_{ik} Y^*$, $Z' \sim_{j k} Y^*$, and $X', Y', Z' \not\sim_{ijk} Y^*$.
	Thus, by \cref{lem:nonlaminar},
	$\{X', Y', Z'\} \subseteq \mathcal{F}'$ is not laminarizable,
	contradicting the laminarizability of $\mathcal{F}'$.
	
	(2).
	Step~1 can be done in $O(\sum_{i \in [r-1]}|\mathcal{Q}_{ir}|)$ time by \cref{prop:algo 2}.
	Step~2 can be done in $O(rn^2)$ time by using the structure of $\mathcal{F}_{ir}$ as follows.
	
	First we consider the running-time of one iteration in Step~2 for $X' \in \mathcal{F}'$.
	By the assumption $|A_1| \geq |A_2| \geq \cdots \geq |A_r|$,
	it holds $r|A_r| = O(n)$.
	Recall $\ave{X'} = A_S$.
	Without loss of generality,
	we can assume that if $X \in \mathcal{G}^{(p)}$ satisfies $X \sim_{p} X'$ for some $p \in S$,
	then $X \cap A_{p} = X' \cap A_{p}$.
	For each $p \in S$,
	construct $\mathcal{F}_p := \{ X \cap A_r \mid X \in \mathcal{G}^{(p)},\ X \cap A_{p} = X' \cap A_{p} \}$ in $O(|\bigcup_{p \in S} \mathcal{G}^{(p)}|) = O(\sum_{i \in [r-1]}|A_{ir}|) = O(n)$ time.
	By the laminarity of $\mathcal{G}^{(p)}$,
	$\mathcal{F}_p$ is a chain $\{ F_p^1, F_p^2, \dots, F_p^{k} \}$ for $p \in S$,
	where $F_p^1 \supsetneq F_p^2 \supsetneq \cdots \supsetneq F_p^{k}$
	(this chain can be obtained while constructing $\mathcal{G}^{(p)}$ in Algorithm~1).
	Take any maximal element $F$ in $\bigcup_{p \in S} \mathcal{F}_p$,
	and obtain the set $R := \{p \mid F \in \mathcal{F}_p \}$.
	This can be done in $O(r)$ time.
	Here, if $|R| \geq 2$,
	construct $Y$ with $\ave{Y} = A_{R \cup \{r\}}$, $Y \cap A_R = X' \cap A_R$, and $Y \cap A_r = F$,
	and add $Y$ to $\mathcal{F}^{X'}$.
	Then, for each $p$ with $F = F_p^1$,
	update $\mathcal{F}_p \leftarrow \mathcal{F}_p \setminus \{F_p^1\}$,
	and do the same thing.
	By repeating this procedure in $O(|\bigcup_{p \in S} \mathcal{F}_p|) = O(n)$ times,
	we can construct the set $\mathcal{F}^{X'}$.
	Hence one iteration in Step~2 takes $O(rn)$ time.
	Since $\mathcal{F}' = O(|A_{[r-1]}|) = O(n)$,
	the number of iterations in Step~2 is $O(n)$.
	Thus the running-time of Step~2 is bounded by $O(rn^2)$.
	
	Step~3 can be done in $O(n^4)$ time.
	Indeed, by the construction of $\mathcal{F}^X$ in Step~2 as above,
	$|\mathcal{F}^X| = O(n)$.
	Hence, before updating $\mathcal{F}$,
	it holds that
	$|\mathcal{F}| = O(n) + O(n^2) + O(n) = O(n^2)$,
	where the first, second, and last terms come from $\mathcal{F}'$, $\bigcup_{X' \in \mathcal{F}'}\mathcal{F}^{X'}$, and $\bigcup_{i \in [r-1]} \mathcal{G}^{(i)}$,
	respectively.
	Since it takes $O(|\mathcal{F}|^2)$ time to update $\mathcal{F}$ in Step~3,
	the running-time of Step~3 is bounded by $O(n^4)$.
	
	Thus Algorithm~3 runs in $O(\sum_{i \in [r-1]}|\mathcal{Q}_{ir}| + rn^2 + n^4) = O(n^4)$ time.
	Indeed, $O(\sum_{i \in [r-1]}|\mathcal{Q}_{ir}|) = O(|A_{[r-1]}|^2|A_r|^2) = O(n^4)$
	by $|\mathcal{Q}_{ij}| = \binom{|A_i|}{2}\binom{|A_j|}{2} = O(|A_i|^2 |A_j|^2)$.
\end{proof}
	
Our proposed algorithm for {\sc Displaying} is the following.
\begin{description}
	\item[Algorithm~4 (for {\sc Displaying}):]
	\item[Step 1:]
	Execute Algorithm~2 for $\mathcal{Q}_{12}$.
	If Algorithm~2 returns ``$\mathcal{Q}_{12}$ is not compatible,''
	then output ``$\mathcal{Q}$ is not compatible''
	and stop.
	Otherwise, obtain $\mathcal{F}^{(2)}$.
	\item[Step 2:]
	For $t = 3, \dots, r$,
	execute Algorithm~3 for $\mathcal{F}^{(t-1)}$.
	If Algorithm~3 returns ``$\mathcal{Q}_{[t]}$ is not compatible,''
	then
	output ``$\mathcal{Q}$ is not compatible''
	and stop.
	Otherwise, obtain $\mathcal{F}^{(t)}$.
	\item[Step 3:]
	Output $\mathcal{F} := \mathcal{F}^{(r)}$.
	\qqed
\end{description}

\begin{thm}\label{thm:algo 4}
	Algorithm~{\rm 4} solves {\sc Displaying} in $O(rn^4)$ time.
	Furthermore, if the input is compatible,
	then the output is a minimal laminarizable $\mathcal{A}$-cut family.
\end{thm}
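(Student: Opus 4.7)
The plan is to prove both the validity and the running time of Algorithm~4 by induction on $t$, using \cref{prop:algo 2,prop:algo 3,lem:restriction,prop:r=2 unique,prop:r>=3 unique} as black boxes. The overall structure is essentially a bookkeeping argument that stitches together the previously proved subroutine correctness results.

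First I would set up the inductive hypothesis: after the $t$-th iteration, either Algorithm~4 has already terminated with a correct ``$\mathcal{Q}$ is not compatible'' declaration, or $\mathcal{F}^{(t)}$ is an $\mathcal{A}_{[t]}$-cut family displaying $\mathcal{Q}_{[t]}$, and moreover, whenever $\mathcal{Q}$ is compatible, $\mathcal{F}^{(t)}$ is a minimal laminarizable $\mathcal{A}_{[t]}$-cut family. The base case $t = 2$ is immediate from \cref{prop:algo 2}: if Algorithm~2 reports incompatibility of $\mathcal{Q}_{12}$, then by the contrapositive of \cref{lem:restriction} the system $\mathcal{Q}$ itself is incompatible, so Algorithm~4's declaration is correct; otherwise, Algorithm~2 returns a laminar $\mathcal{A}_{12}$-cut family displaying $\mathcal{Q}_{12}$, and \cref{prop:r=2 unique} tells us that any laminarizable displaying family is uniquely determined up to $\sim$, so the output is automatically minimal laminarizable.

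For the inductive step from $t-1$ to $t$, I would invoke \cref{prop:algo 3} with $r$ replaced by $t$ and inputs $(\mathcal{Q}_{[t]}, \mathcal{F}^{(t-1)})$. The validity clause of \cref{prop:algo 3} ensures that whenever Algorithm~3 outputs a family, that family displays $\mathcal{Q}_{[t]}$; if Algorithm~3 aborts, then $\mathcal{Q}_{[t]}$ is incompatible, and by \cref{lem:restriction} so is $\mathcal{Q}$, so the abort is correct. When $\mathcal{Q}$ is compatible, \cref{lem:restriction} yields compatibility of $\mathcal{Q}_{[t]}$, and the inductive hypothesis gives minimal laminarizability of $\mathcal{F}^{(t-1)}$; the second clause of \cref{prop:algo 3} then guarantees that $\mathcal{F}^{(t)}$ is itself minimal laminarizable, completing the induction. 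Setting $t = r$ yields exactly the two assertions of the theorem.

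For the running time, \cref{prop:algo 2} bounds Step~1 by $O(|\mathcal{Q}_{12}|) = O(n^4)$, and \cref{prop:algo 3} bounds each of the $r - 2$ invocations of Algorithm~3 in Step~2 by $O(n^4)$, so the total is $O(rn^4)$. The main subtlety I expect is verifying that the hypotheses of \cref{prop:algo 3} genuinely survive across iterations: the size bound $|\mathcal{F}^{(t-1)}| \leq 2|A_{[t-1]}|$ is guaranteed by the minimal-laminarizability clause (a minimal laminar family on a ground set of size $m$ has at most $2m$ nontrivial members), and minimal laminarizability itself is exactly what the inductive hypothesis delivers, so the induction is self-sustaining. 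Once these invariants are pinned down, everything else is routine composition.
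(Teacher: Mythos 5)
Your proposal is correct and follows essentially the same route as the paper: the paper's proof is exactly this composition of \cref{prop:algo 2}, \cref{prop:algo 3}, and \cref{lem:restriction} across the iterations $t=2,\dots,r$, with the same $O(|\mathcal{Q}_{12}| + \sum_{t}|A_{[t]}|^4) = O(rn^4)$ time bound. Your explicit inductive formulation, including the observation that minimal laminarizability of $\mathcal{F}^{(t-1)}$ supplies the size precondition $|\mathcal{F}^{(t-1)}|\leq 2|A_{[t-1]}|$ needed by Algorithm~3, just makes precise the invariants the paper leaves implicit.
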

	\begin{proof}
		(Validity).
		Suppose that $\mathcal{Q}$ is compatible.
		Then so is $\mathcal{Q}_{12}$ by \cref{lem:restriction}.
		We can obtain appropriate $\mathcal{F}^{(2)}$ in Step~1 by \cref{prop:algo 2}.
		Furthermore, by \cref{prop:algo 2},
		Step~2 constructs appropriate $\mathcal{F}^{(t)}$ for all $t = 3, \dots, r$.
		Hence, in Step~3, the algorithm outputs a minimal laminarizable family $\mathcal{F}$.
		This implies that, if Algorithm~3 outputs ``$\mathcal{Q}$ is not compatible,''
		then actually $\mathcal{Q}$ is not compatible.
		
		Even if $\mathcal{Q}$ is not compatible and Algorithm~2 outputs some $\mathcal{A}$-cut family $\mathcal{F}$,
		\cref{prop:algo 3} justifies that $\mathcal{F}$ displays $\mathcal{Q}$
		since $\mathcal{F}^{(2)}$ displays $\mathcal{Q}_{12}$ by \cref{prop:algo 2}.
		
		(Complexity).
		Note that $t|A_{t}| = O(|A_{[t]}|)$ for $t = 3, \dots, r$.
		Steps~1 and~2 take $O(|\mathcal{Q}_{12}| + \sum_{t = 3}^r |A_{[t]}|^4) = O(r|A_{[r]}|^4) = O(rn^4)$ time.
		Thus the running-time of Algorithm~3 is $O(rn^4)$.
	\end{proof}

\section{Full multipartite quartet system}\label{sec:full}
\subsection{{\sc Full Displaying} and {\sc Full Laminarization}}\label{subsec:full preliminary}
As in \cref{subsec:complete preliminary},
we see that \QCP\ for full multipartite quartet systems can be divided into two subproblems named as {\sc Full Displaying} and {\sc Full Laminarization}.
The outline of the argument is the same as the case of complete multipartite quartet systems.
Let $\mathcal{Q}$ be a full quartet system on finite set $A \subseteq [n]$.
We say that a family $\mathcal{F} \subseteq 2^{[n]}$ {\it displays} $\mathcal{Q}$
if for all distinct $a, b, c, d \in A$,
the following (i) and (ii) are equivalent:
\begin{itemize}
	\item[(i)] $ab || cd$ belongs to $\mathcal{Q}$.
	\item[(ii)] There is $X \in \mathcal{F}$ satisfying $a, b \in X \not\ni c, d$ or $a, b \not\in X \ni c, d$.
\end{itemize}
Let $\mathcal{A} := \{A_1, A_2, \dots, A_r\}$ be a partition of $[n]$
with $|A_i| \geq 2$ for all $i \in [r]$,
and $\mathcal{Q} = \mathcal{Q}_0 \cup \mathcal{Q}_1 \cup \cdots \cup \mathcal{Q}_r$ be a full $\mathcal{A}$-partite quartet system.
Here $\mathcal{Q}_0$ is complete $\mathcal{A}$-partite and
$\mathcal{Q}_i$ is full on $A_i$ for each $i \in [r]$.
We also say that $\mathcal{F}$ {\it displays} $\mathcal{Q}$
if $\mathcal{F}$ displays all $\mathcal{Q}_0, \mathcal{Q}_1, \dots, \mathcal{Q}_r$.
We can see that a full $\mathcal{A}$-partite system $\mathcal{Q}$ is compatible
if and only if there exists a laminar family $\mathcal{L}$ displaying $\mathcal{Q}$,
and that, in the compatible case,
$\mathcal{Q}$ is displayed by
the phylogenetic tree corresponding to $\mathcal{L}$.
Therefore \QCP\ for full $\mathcal{A}$-partite quartet system $\mathcal{Q}$ can be viewed as the problem of finding a laminar family $\mathcal{L}$ displaying $\mathcal{Q}$ if it exists.

% The following implies that \QCP\ for full $\mathcal{A}$-partite quartet system $\mathcal{Q}$ can be viewed as the problem of finding a laminar family $\mathcal{L}$ displaying $\mathcal{Q}$ if it exists;
% the proof is similar to that of \cref{lem:definition}.
% \begin{lem}\label{lem:full definition}
% 	A full $\mathcal{A}$-partite system $\mathcal{Q}$ is compatible
% 	if and only if there exists a laminar family $\mathcal{L}$ displaying $\mathcal{Q}$.
% 	In particular, in the compatible case,
% 	$\mathcal{Q}$ is displayed by
% 	the phylogenetic tree corresponding to $\mathcal{L}$.
% \end{lem}

The equivalent relation $\sim$ introduced in \cref{subsec:complete preliminary}
can also capture the equivalence of full $\mathcal{A}$-partite quartet systems displayed by families.
A set $X \subseteq [n]$ is called a {\it weak $\mathcal{A}$-cut}
if $X$ is an $\mathcal{A}$-cut, or $\ave{X} = A_i$ for some $i \in [r]$ and $\min \{ |X|, |A_i \setminus X| \} \geq 2$.
One can see that $X$ is a weak $\mathcal{A}$-cut
if and only if $\{X\}$ and $\{\emptyset\}$ display different full $\mathcal{A}$-partite quartet systems,
and
that for weak $\mathcal{A}$-cuts $X$ and $Y$, $X \sim Y$
if and only if $\{X\}$ and $\{ Y \}$ display the same full $\mathcal{A}$-partite quartet system.
For weak $\mathcal{A}$-cut families $\mathcal{F}$ and $\mathcal{G}$,
if $\mathcal{F} \sim \mathcal{G}$
then both $\mathcal{F}$ and $\mathcal{G}$ display the same full $\mathcal{A}$-partite quartet system.
% A weak $\mathcal{A}$-cut family $\mathcal{F}$ is said to be {\it laminarizable}
% if there is a laminar family $\mathcal{L}$ with $\mathcal{F} \sim \mathcal{L}$.

% We also introduce an equivalent relation $\approx$ on sets $X, Y \subseteq [n]$ by:
% $X \approx Y$ if the families $\{X\}$ and $\{Y\}$ display the same full $\mathcal{A}$-partite quartet system.
% A set $X \subseteq [n]$ is called a {\it weak $\mathcal{A}$-cut}
% if $X \not\approx \emptyset$.
% One can see that $X$ is a weak $\mathcal{A}$-cut if and only if
% $X$ is an $\mathcal{A}$-cut, or $\ave{X} = A_i$ for some $i \in [r]$ and $\min \{ |X|, |A_i \setminus X| \} \geq 2$.
% One can see that,
% for weak $\mathcal{A}$-cuts $X, Y$,
% it holds that $X \approx Y \Leftrightarrow \{ \ave{X} \cap X, \ave{X} \setminus X \} = \{ \ave{Y} \cap Y, \ave{Y} \setminus Y \}$.
% The equivalence relation is extended to weak $\mathcal{A}$-cut families $\mathcal{F}, \mathcal{G}$ by:
% $\mathcal{F} \approx \mathcal{G} \Leftrightarrow \mathcal{F} / {\approx} = \mathcal{G} / {\approx}$,
% where $\mathcal{F} / {\approx}$ is defined as in \cref{subsec:complete preliminary}.
% A weak $\mathcal{A}$-cut family $\mathcal{F}$ is said to be {\it laminarizable}
% if there is a laminar family $\mathcal{L}$ with $\mathcal{F} \approx \mathcal{L}$.
% Note that an $\mathcal{A}$-cut is a weak $\mathcal{A}$-cut,
% and for $\mathcal{A}$-cuts or $\mathcal{A}$-cut families,
% the equivalence relations $\sim$ and $\approx$ are the same.

% By the same argument as in \cref{subsec:complete preliminary},
By the above argument,
\QCP\ for a full $\mathcal{A}$-partite quartet system $\mathcal{Q}$ can be divided into the following two subproblems.
\begin{description}
	\item[\underline{\sc Full Displaying}]
	\item[Given:] A full $\mathcal{A}$-partite quartet system $\mathcal{Q}$.
	\item[Problem:] Either detect the incompatibility of $\mathcal{Q}$,
	or obtain some weak $\mathcal{A}$-cut family $\mathcal{F}$ displaying $\mathcal{Q}$.
	In addition, if $\mathcal{Q}$ is compatible,
	then $\mathcal{F}$ should be laminarizable.
	\item[\underline{\sc Full Laminarization}]
	\item[Given:] A weak $\mathcal{A}$-cut family $\mathcal{F}$.
	\item[Problem:] Determine whether
	$\mathcal{F}$ is laminarizable or not.
	If $\mathcal{F}$ is laminarizable,
	obtain a laminar weak $\mathcal{A}$-cut family $\mathcal{L}$ with 
	$\mathcal{L} \sim \mathcal{F}$.
\end{description}
Here, in {\sc Full Laminarization}, we assume that no distinct $X, Y$ with $X \sim Y$ are contained in $\mathcal{F}$,
i.e., $|\mathcal{F}| = |\mathcal{F} / {\sim}|$.

{\sc Full Laminarization} can be solved in $O(n^4)$ time
by reducing to {\sc Laminarization}.
\begin{thm}\label{thm:full laminarization}
	{\sc Full Laminarization} can be solved in $O(n^4)$ time.
\end{thm}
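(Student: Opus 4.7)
The plan is to reduce \textsc{Full Laminarization} to \textsc{Laminarization} via a shadow-block construction and then invoke \cref{thm:laminarization}. For each $i \in [r]$, introduce a disjoint copy $\tilde{A}_i = \{\tilde{a} : a \in A_i\}$ of $A_i$, set $[n'] := [n] \cup \bigcup_{i} \tilde{A}_i$ (so $|[n']| = 2n$), and let $\mathcal{A}' := \{A_1, \tilde{A}_1, \dots, A_r, \tilde{A}_r\}$, a partition of $[n']$ into $2r$ blocks each of size at least $2$. Define the lifting map $\phi \colon 2^{[n]} \to 2^{[n']}$ by $\phi(X) := X \cup \tilde{X}$, where $\tilde{X} := \{\tilde{a} : a \in X\}$.

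First I would verify that $\phi$ induces a bijection between $\approx$-classes of weak $\mathcal{A}$-cuts and $\sim$-classes of $\mathcal{A}'$-cuts in its image. For a weak $\mathcal{A}$-cut $X$ with $\ave{X} = A_S$ (allowing $|S|=1$ when $X$ is confined to one block), a direct calculation shows that, with respect to $\mathcal{A}'$, the support of $\phi(X)$ is $A_S \cup \tilde{A}_S$; in particular $\phi(X)$ is an $\mathcal{A}'$-cut. The canonical two-block split of $\phi(X)$ on $A_S \cup \tilde{A}_S$ simply duplicates the split of $X$ on its shadow, so $X \approx Y$ if and only if $\phi(X) \sim \phi(Y)$ relative to $\mathcal{A}'$. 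Consequently $\phi(\mathcal{F}) := \{\phi(X) : X \in \mathcal{F}\}$ is an $\mathcal{A}'$-cut family with no duplicates modulo $\sim$, and hence a valid input to \textsc{Laminarization}.

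Next I would show that $\mathcal{F}$ is laminarizable if and only if $\phi(\mathcal{F})$ is. The forward direction is immediate because $\phi$ preserves inclusion and disjointness pointwise: $X \subseteq Y$ iff $\phi(X) \subseteq \phi(Y)$, and $X \cap Y = \emptyset$ iff $\phi(X) \cap \phi(Y) = \emptyset$, so any laminar $\mathcal{L} \approx \mathcal{F}$ lifts to a laminar $\phi(\mathcal{L}) \sim \phi(\mathcal{F})$. The main obstacle is the converse. Given a laminar $\mathcal{L}'$ with $\mathcal{L}' \sim \phi(\mathcal{F})$, set $\mathcal{L} := \{Z' \cap [n] : Z' \in \mathcal{L}'\}$. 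Since intersecting every member of a laminar family with a common set preserves laminarity, $\mathcal{L}$ is laminar. To verify $\mathcal{L} \approx \mathcal{F}$, note that a member $Z' \in \mathcal{L}'$ which is $\sim$-equivalent to $\phi(X)$ may differ from $\phi(X)$ both by flipping the two-block split within the support $A_S \cup \tilde{A}_S$ and by adjoining whole blocks $A_k$ or $\tilde{A}_k$ for $k \notin S$; however, full blocks are intersected trivially (not nontrivially), so in $\mathcal{A}$ one still has $\ave{Z' \cap [n]} = \ave{X}$, and the induced split of $Z' \cap [n]$ on $\ave{X}$ coincides with that of $X$. Hence $Z' \cap [n] \approx X$, which is the crux of the argument.

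Since $|[n']| = 2n$, applying \cref{thm:laminarization} to $\phi(\mathcal{F})$ with the partition $\mathcal{A}'$ takes $O((2n)^4) = O(n^4)$ time; the construction of $\phi(\mathcal{F})$ and the projection back to $\mathcal{L}$ are dominated by this cost, so the whole algorithm runs in $O(n^4)$ time, proving the theorem.
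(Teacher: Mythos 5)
Your proof is correct, and it follows the same overall strategy as the paper's: reduce \textsc{Full Laminarization} to \textsc{Laminarization} by enlarging the ground set and the partition so that every weak $\mathcal{A}$-cut becomes a genuine cut, invoke \cref{thm:laminarization}, and project the resulting laminar family back by intersecting with $[n]$. The gadget differs, though. The paper attaches to each $X \in \mathcal{F}$ its own new two-element block $A^X$ and tags $X$ with one element of $A^X$, so that the support of $X_+$ gains a private block; your construction instead duplicates every block $A_i$ once and for all and lifts $X$ to $X \cup \tilde{X}$, so that a set confined to a single block automatically acquires a second nontrivially-met block. Both yield a ground set of size $O(n)$ and hence the same $O(n^4)$ bound. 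Your version has the mild advantages that the new ground set is exactly $2n$ independently of $|\mathcal{F}|$, and that the bijection between $\approx$-classes and $\sim$-classes in the image of $\phi$ is established by an explicit computation (the paper leaves the analogous equivalence as ``easily seen''); your back-projection step, checking that adjoined whole blocks and flipped splits change neither the $\mathcal{A}$-support nor the induced bipartition of $Z' \cap [n]$, is exactly the verification required. One small omission: as the paper does, you should first reject the input when $|\mathcal{F}| > 2n$ (a laminarizable family with $|\mathcal{F}| = |\mathcal{F} / {\approx}|$ has at most $2n$ members), since otherwise merely writing down $\phi(\mathcal{F})$ need not fit within the $O(n^4)$ budget.
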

	\begin{proof}
		Let $\mathcal{F}$ be the input weak $\mathcal{A}$-cut family of {\sc Full Displaying}.
		We can assume $|\mathcal{F}| \leq 2n$
		since otherwise $\mathcal{F}$ is not laminarizable.
		For each $X \in \mathcal{F}$,
		we add a new set $A^X$ with $|A^X| = 2$ to the ground set $[n]$ and to the partition $\mathcal{A}$ of $[n]$;
		the ground set will be $[n] \cup \bigcup_{X \in \mathcal{F}} A^X$ and the partition will be $\mathcal{A}_+ := \mathcal{A} \cup \{ A^X \mid X \in \mathcal{F}\}$.
		Note that the size of the new ground set is $O(n)$
		by $|\mathcal{F}| \leq 2n$ and $|A^X| = 2$.
		Define $X_+ := X \cup \{x\}$, where $x$ is one of the two elements of $A^X$
		and
		define $\mathcal{F}_+ := \{ X_+ \mid X \in \mathcal{F} \}$.
		Since $\ave{X_+} = \ave{X} \cup A^X$,
		$\mathcal{F}_+$ is an $\mathcal{A}_+$-cut family.
		It is easily seen that there exists a laminar family $\mathcal{L}$ with $\mathcal{L} \sim \mathcal{F}$ if and only if there exists a laminar family $\mathcal{L}_+$ with $\mathcal{L}_+ \sim \mathcal{F}_+$.
		Furthermore, from such $\mathcal{L}_+$, we can construct a desired laminar family $\mathcal{L}$
		by $\mathcal{L} := \{ X \cap [n] \mid X \in \mathcal{L}_+ \}$.
		Thus, by \cref{thm:laminarization},
		{\sc Full Laminarization} can be solved in $O(n^4)$ time.
	\end{proof}
	
In \cref{subsec:full algo},
we give an $O(rn^4)$-time algorithm for {\sc Full Displaying} (\cref{thm:algo 5}).
Thus, by \cref{thm:full laminarization,thm:algo 5},
we obtain \cref{thm:main} for full $\mathcal{A}$-partite quartet systems.

\subsection{Algorithm for full multipartite quartet system}\label{subsec:full algo}
Our proposed algorithm for full multipartite quartet systems is devised by combining Algorithm~4 for complete multipartite quartet systems and an algorithm for full quartet systems.
For full quartet system $\mathcal{Q}$,
it is known~\cite{AAM/BD86} that
\QCP\ can be solved in linear time of $|\mathcal{Q}|$,
and that a phylogenetic tree displaying $\mathcal{Q}$ is uniquely determined.
By summarizing these facts with notations introduced in this paper,
we obtain the following.
\begin{thm}[\cite{AAM/BD86,BJMSP/CS81}]\label{thm:full}
	Suppose that $\mathcal{Q}$ is full on $[n]$.
	Then \QCP\ can be solved in $O(|\mathcal{Q}|)$ time.
	Furthermore, if $\mathcal{Q}$ is compatible,
	then a weak $\{[n]\}$-cut family $\mathcal{F}$ displaying $\mathcal{Q}$ is uniquely determined up to $\sim$.
\end{thm}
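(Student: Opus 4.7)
The plan is to derive both statements from the classical work of Colonius--Schulze~\cite{BJMSP/CS81} and Bandelt--Dress~\cite{AAM/BD86}, translated into the weak $\mathcal{A}$-cut language developed in Section~3.1.

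For the algorithmic part, I would invoke the Bandelt--Dress reconstruction algorithm for full quartet systems directly. Their procedure either builds a phylogenetic tree $T$ displaying $\mathcal{Q}$ or reports incompatibility by scanning each quartet in $\mathcal{Q}$ a constant number of times, which gives a running time of $O(|\mathcal{Q}|)$. Note that $|\mathcal{Q}| = \binom{n}{4} = \Theta(n^4)$ for a full system on $[n]$, so this bound is meaningful.

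For the uniqueness claim, I would first recall the classical fact (already implicit in~\cite{BJMSP/CS81}) that a compatible full quartet system $\mathcal{Q}$ determines its displaying tree $T$ uniquely up to isomorphism: every internal edge $e$ of $T$ induces a bipartition $\{X_e, Y_e\}$ of $[n]$, and this bipartition can be read off from $\mathcal{Q}$ because $ab\|cd \in \mathcal{Q}$ if and only if some internal edge of $T$ separates $\{a,b\}$ from $\{c,d\}$. Then by the correspondence between phylogenetic trees and laminar families in the Preliminaries, $T$ corresponds to a laminar family $\mathcal{L}$ of subsets of $[n]$, unique up to the choice of $X$ versus $[n] \setminus X$ for each internal edge. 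Now suppose $\mathcal{F}$ is any weak $\{[n]\}$-cut family displaying $\mathcal{Q}$. By the definition of displaying, each $X \in \mathcal{F}$ encodes a bipartition $\{X, [n] \setminus X\}$ whose associated quartets lie in $\mathcal{Q}$, and a short verification shows that this bipartition must coincide with $\{X_e, Y_e\}$ for some internal edge $e$ of $T$; conversely every internal edge of $T$ must appear in $\mathcal{F}$, since otherwise some $ab\|cd \in \mathcal{Q}$ would be unwitnessed. Hence $\mathcal{F}$ and $\mathcal{L}$ induce the same set of bipartitions, which is exactly the statement $\mathcal{F} \approx \mathcal{L}$ in the case $\mathcal{A} = \{[n]\}$, where $\ave{X} = [n]$ for every weak $\{[n]\}$-cut $X$.

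The main obstacle is to make precise the claim that $X \approx Y$ is exactly the relation ``$\{X,[n]\setminus X\} = \{Y,[n]\setminus Y\}$'' for weak $\{[n]\}$-cuts, and that ``displaying the same full quartet system on $[n]$'' reduces to having the same bipartition. Both facts follow by unwinding the definition of displaying from Section~3.1 together with the characterization $X \approx Y \iff \{\ave{X}\cap X, \ave{X}\setminus X\} = \{\ave{Y}\cap Y, \ave{Y}\setminus Y\}$. Once this dictionary is in place, uniqueness of $T$ translates immediately into uniqueness of $\mathcal{F}$ up to $\approx$, completing the proof.
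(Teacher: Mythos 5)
Your proposal is correct and matches the paper's treatment: the paper gives no independent proof of \cref{thm:full}, but simply cites the Bandelt--Dress/Colonius--Schulze results on linear-time reconstruction and uniqueness of the displaying tree and notes that they translate into the weak $\{[n]\}$-cut language. Your additional work---identifying each weak $\{[n]\}$-cut in a displaying family with an internal-edge split of the unique tree, and observing that $\approx$ reduces to equality of bipartitions $\{X,[n]\setminus X\}$---is exactly the ``summarizing with the notations of this paper'' step the authors leave implicit, and it is carried out correctly.
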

	
Let $\mathcal{A} := \{A_1, A_2, \dots, A_r\}$ be a partition of $[n]$ with $|A_i| \geq 2$ for all $i \in [r]$.
Suppose that a full $\mathcal{A}$-partite quartet system $\mathcal{Q} =\mathcal{Q}_0 \cup \mathcal{Q}_1 \cup \cdots \cup \mathcal{Q}_r$ is compatible.
Then we can obtain a minimal laminarizable $\mathcal{A}$-cut family $\mathcal{F}_0$ displaying $\mathcal{Q}_0$ and a laminar weak $\mathcal{A}$-cut family $\mathcal{L}_i \subseteq 2^{A_i}$ displaying $\mathcal{Q}_i$ for each $i \in [r]$.
By combining $\mathcal{F}_0, \mathcal{L}_1, \dots, \mathcal{L}_r$ appropriately,
we can construct a minimal laminarizable weak $\mathcal{A}$-cut family displaying $\mathcal{Q}$ as follows.
\begin{description}
	\item[Algorithm~5 (for {\sc Full Displaying}):]
	\item[Input:] A full $\mathcal{A}$-partite quartet system $\mathcal{Q} = \mathcal{Q}_0 \cup \mathcal{Q}_1 \cup \cdots \cup \mathcal{Q}_r$.
	\item[Output:] 
	Either detect the incompatibility of $\mathcal{Q}$,
	or obtain a weak $\mathcal{A}$-cut family $\mathcal{F}$ displaying $\mathcal{Q}$.
	%	\item[Step 0:]
	%	Obtain $\mathcal{Q}_0, \mathcal{Q}_1, \dots, \mathcal{Q}_r$ from $\mathcal{Q} = \mathcal{Q}_0 \cup \mathcal{Q}_1 \cup \cdots \cup \mathcal{Q}_r$,
	%	where
	%	$\mathcal{Q}_0$ is complete $\mathcal{A}$-partite
	%	and $\mathcal{Q}_i$ is full on $A_i$ for each $i \in [r]$.
	\item[Step 1:]
	Solve {\sc Displaying} for $\mathcal{Q}_0$ by Algorithm~4
	and \QCP\ for $\mathcal{Q}_i$ for $i \in [r]$.
	If algorithms detect the incompatibility of $\mathcal{Q}_i$ for some $i$,
	then output ``$\mathcal{Q}$ is not compatible'' and stop.
	Otherwise, obtain an $\mathcal{A}$-cut family $\mathcal{F}_0$ displaying $\mathcal{Q}_0$
	and a laminar weak $\mathcal{A}$-cut family $\mathcal{L}_i \subseteq 2^{A_i}$ displaying $\mathcal{Q}_i$ for each $i \in [r]$.
	\item[Step 2:] Let $\mathcal{F}_i := \{ X \cap A_i \mid X \in \mathcal{F}_0 \textrm{ with }\ave{X} \supseteq A_i,\ X \cap A_i \textrm{ is a weak $\mathcal{A}$-cut} \}$ for $i \in [r]$.
	If $\mathcal{F}_i / {\sim} \not\subseteq \mathcal{L}_i / {\sim}$,
	then output ``$\mathcal{Q}$ is not compatible'' and stop.
	\item[Step 3:] Define $\mathcal{F} := \mathcal{F}_0 \cup \bigcup_{i \in [r]} \{ Y \in \mathcal{L}_i \mid Y \not\sim X \textrm{ for all } X \in \mathcal{F}_i \}$.
	If $|\mathcal{F}| \leq 2n$,
	then output $\mathcal{F}$.
	Otherwise, output ``$\mathcal{Q}$ is not compatible.''
	\qqed
\end{description}

\begin{thm}\label{thm:algo 5}
	Algorithm~{\rm 5} solves {\sc Full Displaying} in $O(rn^4)$ time.
	Furthermore, if the input is compatible,
	then the output is a minimal laminarizable weak $\mathcal{A}$-cut family.
\end{thm}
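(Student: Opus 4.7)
The plan is to split the analysis into a validity argument under compatibility, a ``the output still displays'' argument in the incompatible case, and a running-time estimate. Suppose first that $\mathcal{Q}$ is compatible and fix, by \cref{lem:full definition}, a laminar weak $\mathcal{A}$-cut family $\mathcal{L}$ displaying $\mathcal{Q}$. Partition $\mathcal{L}$ into its $\mathcal{A}$-cut part $\mathcal{L}^{\star}$ and, for each $i \in [r]$, the part $\mathcal{L}^{(i)}$ consisting of members $X$ with $\ave{X} = A_i$. Then $\mathcal{L}^{\star}$ is a laminarizable $\mathcal{A}$-cut family displaying $\mathcal{Q}_0$, and each $\mathcal{L}^{(i)}$ is a laminar weak cut family on $A_i$ displaying $\mathcal{Q}_i$. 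Hence every subproblem in Step~1 is compatible, and \cref{thm:algo 4} and \cref{thm:full} produce a minimal laminarizable $\mathcal{F}_0$ displaying $\mathcal{Q}_0$ and the unique (up to $\approx$) laminar $\mathcal{L}_i$ displaying $\mathcal{Q}_i$.

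The central step I would prove next is $\mathcal{F}_i / {\approx} \subseteq \mathcal{L}_i / {\approx}$. By \cref{prop:r>=3 unique} and the minimality of $\mathcal{F}_0$, every $X \in \mathcal{F}_0$ is $\sim$-equivalent to some member of $\mathcal{L}^{\star}$; restricting the shared bipartition to $A_i$ shows that, whenever $X \cap A_i$ is a weak $\mathcal{A}$-cut, it is $\approx$-equivalent to a member of $\mathcal{L}^{(i)}$, and hence to a member of $\mathcal{L}_i$ by the uniqueness assertion of \cref{thm:full}. Consequently the output $\mathcal{F} = \mathcal{F}_0 \cup \bigcup_{i \in [r]} \{ Y \in \mathcal{L}_i \mid Y \not\approx X \textrm{ for all } X \in \mathcal{F}_i \}$ is $\approx$-equivalent to $\mathcal{L}^{\star} \cup \bigcup_i \mathcal{L}^{(i)} = \mathcal{L}$, so $\mathcal{F}$ is laminarizable, displays $\mathcal{Q}$, satisfies $|\mathcal{F}| \leq 2n$ by the standard laminar bound, and is minimal because its constituents are. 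If instead $\mathcal{Q}$ is incompatible but Step~3 still outputs $\mathcal{F}$, then $\mathcal{F}$ nonetheless displays $\mathcal{Q}$, since $\mathcal{F}_0$ displays $\mathcal{Q}_0$ and the retained members of each $\mathcal{L}_i$ together with the $A_i$-restrictions of $\mathcal{F}_0$ display $\mathcal{Q}_i$; thus the requirement of \textsc{Full Displaying} is met and incompatibility will be detected later by \textsc{Full Laminarization}.

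For complexity, Step~1 costs $O(rn^4)$ via \cref{thm:algo 4} plus $\sum_{i \in [r]} O(|\mathcal{Q}_i|) = O(\sum_i |A_i|^4) = O(n^4)$ via \cref{thm:full}; since $|\mathcal{F}_0|, |\mathcal{L}_i| = O(n)$ (by laminarity up to $\sim$ and $\approx$), Steps~2 and~3 compare $O(n)$ equivalence classes, each comparison reducing to checking bipartitions of $A_i$, and run in $O(n^3)$ time in total. The total is therefore $O(rn^4)$. I expect the main obstacle to be the combinatorial verification that the failure of $\mathcal{F}_i / {\approx} \subseteq \mathcal{L}_i / {\approx}$, or of $|\mathcal{F}| \leq 2n$, actually witnesses incompatibility of $\mathcal{Q}$---this requires dovetailing \cref{prop:r>=3 unique} with the uniqueness part of \cref{thm:full} to rule out any alternative laminarizable completion of $\mathcal{F}_0$ inside the groups.
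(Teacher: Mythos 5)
Your high-level architecture (compatible case, incompatible case, complexity) matches the paper's, and the complexity estimate is fine, but the central laminarizability argument has a genuine gap. First, a factual slip: the subfamily $\mathcal{L}^{(i)}$ of members $X\in\mathcal{L}$ with $\ave{X}=A_i$ does \emph{not} in general display $\mathcal{Q}_i$, because the restrictions $X^*\cap A_i$ of cross-group cuts $X^*\in\mathcal{L}^{\star}$ also separate quartets inside $A_i$; what displays $\mathcal{Q}_i$ is the full restriction $\{X\cap A_i \mid X\in\mathcal{L},\ \ave{X}\supseteq A_i,\ X\cap A_i \text{ a weak cut}\}$. Second, and more seriously, the claimed equivalence $\mathcal{F}\approx\mathcal{L}^{\star}\cup\bigcup_i\mathcal{L}^{(i)}=\mathcal{L}$ does not follow and is false in general: $\mathcal{F}_0$ is only guaranteed to be contained in $\mathcal{L}^{\star}$ up to $\sim$ (it is the \emph{minimal} laminarizable family, while $\mathcal{L}^{\star}$ need not be minimal), and a set $Y\in\mathcal{L}_i$ that survives the filter in Step~3 might a priori correspond in $\mathcal{L}$ only to the restriction $X^*\cap A_i$ of some $\mathcal{A}$-cut $X^*\in\mathcal{L}^{\star}$ with $\ave{X^*}\supseteq A_i\cup A_j$. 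In that scenario $\mathcal{F}$ would contain a weak cut with $\ave{Y}=A_i$ having no $\approx$-counterpart in $\mathcal{L}$, and whether $\mathcal{F}$ is still laminarizable is exactly the question at issue; asserting $\mathcal{F}\approx\mathcal{L}$ begs it.

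The missing step --- which you correctly identify at the end as ``the main obstacle'' but do not carry out --- is to rule this scenario out. The paper does so as follows: if the only $X^*\in\mathcal{F}^*$ with $X^*\cap A_i\approx Y$ satisfies $\ave{X^*}\supseteq A_i\cup A_j$, then $X^*\cap A_{ij}$ is an $\mathcal{A}_{ij}$-cut of $\mathcal{F}^*_{ij}$, and by \cref{prop:r=2 unique} (uniqueness for complete bipartite systems) applied to $\mathcal{Q}_{ij}$ together with \cref{lem:restriction}, the family $(\mathcal{F}_0)_{ij}$ must be $\sim$-equivalent to $(\mathcal{F}^*_0)_{ij}$; hence $\mathcal{F}_0$ already contains some $Z$ with $Z\cap A_i\approx Y$, so $Y$ would have been discarded in Step~3 --- a contradiction. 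With this in hand one obtains $\mathcal{F}/{\approx}\subseteq\mathcal{F}^*/{\approx}$ for an \emph{arbitrary} laminar $\mathcal{F}^*$ displaying $\mathcal{Q}$, which simultaneously gives laminarizability, the bound $|\mathcal{F}|\leq 2n$, and minimality (the latter does not follow merely ``because its constituents are minimal''). A similar explicit argument, using \cref{prop:r>=3 unique} and \cref{thm:algo 4}, is also needed to show that failure of $\mathcal{F}_i/{\approx}\subseteq\mathcal{L}_i/{\approx}$ in Step~2 genuinely witnesses incompatibility; your sketch gestures at this but does not supply it.
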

	\begin{proof}
		(Validity).
		It suffices to show that (i) if Algorithm~5 reaches Step~3,
		then $\mathcal{F}$ defined in Step~3 displays $\mathcal{Q}$,
		and (ii) if $\mathcal{Q}$ is compatible,
		then Algorithm~5 outputs a laminarizable family $\mathcal{F}$.
		
		(i).
		Observe that a weak $\mathcal{A}$-cut family $\mathcal{F}$ displays $\mathcal{Q}$
		if and only if the family of $\mathcal{A}$-cuts in $\mathcal{F}$ displays $\mathcal{Q}_0$
		and $\{ X \cap A_i \mid X \in \mathcal{F} \textrm{ such that } \ave{X} \supseteq A_i,\
		X \cap A_i \textrm{ is a weak $\mathcal{A}$-cut} \} \sim \mathcal{L}_i$ for every $i \in [r]$.
		Note that the latter condition follows from \cref{thm:full}.
		Hence the output $\mathcal{F}$ of Algorithm~5 displays $\mathcal{Q}_i$ for all $i \in [r]$.
		Indeed, by the definition of $\mathcal{F}$, the family of $\mathcal{A}$-cuts in $\mathcal{F}$ is equal to $\mathcal{F}_0$,
		and $\{ X \cap A_i \mid X \in \mathcal{F} \textrm{ such that } \ave{X} \supseteq A_i,\
		X \cap A_i \textrm{ is a weak $\mathcal{A}$-cut} \} \sim \mathcal{L}_i$ for every $i \in [r]$.
		
		(ii).
		Suppose that $\mathcal{Q}$ is compatible.
		Let $\mathcal{F}^*$ be a laminar weak $\mathcal{A}$-cut family displaying $\mathcal{Q}$.
		First we prove that Algorithm~5 reaches Step~3.
		Since $\mathcal{Q}$ is compatible,
		so are $\mathcal{Q}_0, \mathcal{Q}_1, \dots, \mathcal{Q}_r$.
		Suppose, to the contrary, that $\mathcal{F}_i / {\sim} \not\subseteq \mathcal{L}_i / {\sim}$ for some $i \in [r]$.
		By \cref{prop:r>=3 unique} and \cref{thm:algo 4},
		it holds $\mathcal{F}_0 / {\sim} \subseteq \mathcal{F}^*_0 / {\sim}$,
		where $\mathcal{F}^*_0$ is the family of $\mathcal{A}$-cuts in $\mathcal{F}^*$.
		Then it follows from $\mathcal{F}_i / {\sim} \not\subseteq \mathcal{L}_i / {\sim}$ that $\{ X \cap A_i \mid X \in \mathcal{F}^* \textrm{ such that } \ave{X} \supseteq A_i,\
		X \cap A_i \textrm{ is a weak $\mathcal{A}$-cut} \} \not\sim \mathcal{L}_i$,
		contradicting that $\mathcal{F}^*$ displays $\mathcal{Q}_i$.
		
		Next we prove that $\mathcal{F}$ is laminarizable.
		It suffices to show that $\mathcal{F} / {\sim} \subseteq \mathcal{F}^* / {\sim}$.
		Suppose, to the contrary, that $\mathcal{F} / {\sim} \not\subseteq \mathcal{F}^* / {\sim}$.
		Then there is $X \in \mathcal{F}$ with $X \not\sim Y$ for all $Y \in \mathcal{F}^*$.
		By \cref{prop:r>=3 unique} and \cref{thm:algo 4},
		it holds that $X \in \mathcal{L}_i$ for some $i \in [r]$.
		Since $\{ X \cap A_i \mid X \in \mathcal{F}^* \textrm{ such that } \ave{X} \supseteq A_i,\
		X \cap A_i \textrm{ is a weak $\mathcal{A}$-cut} \} \sim \mathcal{L}_i$,
		there is $X^* \in \mathcal{F}^*$ such that $X^* \cap A_i \sim X$.
		If $X^* \in \mathcal{F}^* \setminus \mathcal{F}_0^*$,
		then $\ave{X^*} = A_i$.
		This implies $X^* \sim X$, contradicting the assumption of $X$.
		Suppose that $X^* \in \mathcal{F}_0^*$.
		Then $\ave{X^*} \supseteq A_i \cup A_j$ for some $j \in[r] \setminus \{i\}$.
		By the definition of $\mathcal{F}$,
		there is no $Y \in \mathcal{F}_0$ with $Y \cap A_i \sim X$.
		This implies that $(\mathcal{F}_0)_{ij} \not\sim (\mathcal{F}_0^*)_{ij}$,
		contradicting \cref{prop:r=2 unique}.
		
		The above proof implies that $\mathcal{F}$ is a minimal laminarizable weak $\mathcal{A}$-cut family.
		$|\mathcal{F}| \leq 2n$ follows from the laminarizability and minimality of $\mathcal{F}$.

		(Complexity).
		By \cref{thm:algo 4,thm:full},
		Step~1 can be done in $O(rn^4 + \sum_{i \in [r]} |\mathcal{Q}_i|) = O(rn^4)$ time.
		Steps~2 and~3 take $O(n^2)$ time.
		Thus the running-time of Algorithm~5 is $O(rn^4)$.
	\end{proof}
	
By the proof of \cref{thm:algo 5},
the following corollary holds.
\begin{cor}
	Suppose that a full $\mathcal{A}$-partite quartet system $\mathcal{Q}$ is compatible.
	Then a minimal laminarizable weak $\mathcal{A}$-cut family $\mathcal{F}$ displaying $\mathcal{Q}$ is uniquely determined up to $\sim$.
\end{cor}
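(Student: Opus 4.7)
The plan is to extract the uniqueness statement directly from the validity argument already given for Algorithm~5 (\cref{thm:algo 5}). That argument shows more than mere correctness: for any laminar weak $\mathcal{A}$-cut family $\mathcal{F}^*$ displaying $\mathcal{Q}$, the specific family $\mathcal{F}$ produced by Algorithm~5 satisfies $\mathcal{F}/{\approx}\subseteq \mathcal{F}^*/{\approx}$. So Algorithm~5 in fact constructs a ``canonical'' minimum representative, and the corollary follows by squeezing any minimal laminarizable family between this canonical one and itself.

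Concretely, first I would fix the output $\mathcal{F}_{\mathrm{alg}}$ of Algorithm~5 on input $\mathcal{Q}$; by \cref{thm:algo 5} this $\mathcal{F}_{\mathrm{alg}}$ is laminarizable and displays $\mathcal{Q}$. Now let $\mathcal{F}$ be any minimal laminarizable weak $\mathcal{A}$-cut family displaying $\mathcal{Q}$. Choose a laminar family $\mathcal{F}^*$ with $\mathcal{F}^* \approx \mathcal{F}$; such a representative exists by definition of laminarizability, and $\mathcal{F}^*$ still displays $\mathcal{Q}$ since displaying depends only on the $\approx$-class (this is the standard observation used throughout \cref{subsec:full preliminary}). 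Applying the inclusion established in the validity proof of \cref{thm:algo 5} to this $\mathcal{F}^*$, I get
\begin{equation*}
\mathcal{F}_{\mathrm{alg}}/{\approx}\;\subseteq\;\mathcal{F}^*/{\approx}\;=\;\mathcal{F}/{\approx}.
\end{equation*}

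At this point the key move is to invoke minimality of $\mathcal{F}$: since $\mathcal{F}_{\mathrm{alg}}$ is itself a laminarizable family displaying $\mathcal{Q}$ and its $\approx$-classes form a sub-collection of those of $\mathcal{F}$, the minimality of $\mathcal{F}$ forces equality $\mathcal{F}/{\approx}=\mathcal{F}_{\mathrm{alg}}/{\approx}$, i.e., $\mathcal{F}\approx \mathcal{F}_{\mathrm{alg}}$. Applying the same reasoning to a second minimal laminarizable family $\mathcal{F}'$ displaying $\mathcal{Q}$ gives $\mathcal{F}'\approx \mathcal{F}_{\mathrm{alg}}$, and transitivity of $\approx$ yields $\mathcal{F}\approx \mathcal{F}'$, as desired.

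The only place that requires care is checking that the interpretation of ``minimal'' in the corollary matches the one used implicitly in the proof of \cref{thm:algo 5}: namely that a family $\mathcal{F}$ is minimal if no proper sub-collection of $\mathcal{F}/{\approx}$ is realized by a laminarizable family still displaying $\mathcal{Q}$. Under this reading the argument above is immediate; if ``minimal'' is meant in a stronger cardinality sense, one would still conclude the same thing because $\mathcal{F}_{\mathrm{alg}}$ is a lower bound in the $\approx$-class-inclusion order. I expect no real obstacle beyond making this bookkeeping clean, since all the substantive work—the inclusion $\mathcal{F}_{\mathrm{alg}}/{\approx}\subseteq \mathcal{F}^*/{\approx}$ for every laminar displaying $\mathcal{F}^*$—has already been carried out in the proof of \cref{thm:algo 5} and ultimately rests on \cref{prop:r>=3 unique} and \cref{thm:full}.
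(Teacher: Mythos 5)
Your proposal is correct and follows essentially the paper's own route: the paper justifies the corollary with the single remark ``By the proof of \cref{thm:algo 5}, the following corollary holds,'' and what you have written is precisely the extraction of that argument --- the validity proof of \cref{thm:algo 5} shows $\mathcal{F}_{\mathrm{alg}}/{\approx}\subseteq\mathcal{F}^*/{\approx}$ for an \emph{arbitrary} laminar weak $\mathcal{A}$-cut family $\mathcal{F}^*$ displaying $\mathcal{Q}$, and minimality (in the sub-collection sense the paper uses in \cref{prop:r>=3 unique}) then forces every minimal laminarizable displaying family to coincide with $\mathcal{F}_{\mathrm{alg}}$ up to $\approx$. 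Your care about the meaning of ``minimal'' is consistent with how the paper uses the term, so there is no gap.
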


\section{Concluding remark}\label{sec:remark}
% In this article, we introduce two novel classes of quartet systems, complete multipartite quartet system and full multipartite quartet system,
% and develop a polynomial-time algorithm for finding a phylogenetic tree displaying given such a quartet system if it exists.
We conclude this paper by suggesting a possible situation in which our results are applicable.
Quartet-based phylogenetic tree reconstruction methods
may be viewed as qualitative approximations of 
distance methods that construct a phylogenetic tree from 
(evolutionary) distance 
$\delta : [n] \times [n] \rightarrow \mathbf{R}_+$ among a set $[n]$ of taxa.
Here $\mathbf{R}_+$ denotes the set of nonnegative real values.
The distance $\delta$ naturally gives rise to a full quartet system 
${\cal Q}$ as follows. 
Let ${\cal Q} := \emptyset$ at first. 
For all distinct $a,b,c,d \in [n]$,
add $ab||cd$ to ${\cal Q}$ if 
$\delta(a,b) + \delta(c,d) < \min \{ \delta(a,c) + \delta(b,d), \delta(a,d) + \delta(b,c)\}$.
See~\cite{JME/F81, Psyco/ST77}.
Then ${\cal Q}$ becomes a full quartet system, 
by adding $ab|cd$, $ac|bd$, $ac|bd$
if none of $ab||cd$, $ac||bd$, $ac||bd$ belong to ${\cal Q}$.
If $\delta$ coincides with the path-metric of an actual phylogenetic tree $T$ 
(with nonnegative edge-length),  
then $\delta$ obeys the famous four-point condition on all four elements $a,b,c,d$~\cite{MAHS/B71}:
\begin{description}
	\item[(4pt)] the larger two of $\delta(a,b) + \delta(c,d)$, $\delta(a,c) + \delta(b,d)$, and $\delta(a,d) + \delta(b,c)$ are equal.
\end{description}
In this case, the above definition of quartets 
matches the neighbors relation of $T$.
Thus,
from the full quartet system ${\cal Q}$, via the algorithm of~\cite{AAM/BD86},
we can recover the original phylogenetic tree $T$ (without edge-length). 

Next we consider the following limited situation in which complete/full $\mathcal{A}$-partite quartet systems naturally arise.
The set $[n]$ of taxa is divided into $r$ groups $A_1, A_2, \dots, A_r$ (with $|A_i| \geq 2$).
By reasons of the cost and/or the difficulty of experiments, 
we are limited to measure the distance between $a \in A_i$ and $b \in A_j$ via different methods/equipments depending on $i, j$.
Namely we have $\binom{r}{2}$ distance functions $\delta_{ij} : A_i \times A_j \rightarrow \mathbf{R}_+$ for $1 \leq i < j \leq r$ but
it is meaningless to compare numerical values 
of $\delta_{ij}$ and $\delta_{i'j'}$ for $\{i, j\} \neq \{i',j'\}$.
A complete ${\cal A}$-partite quartet system ${\cal Q}$ is obtained as follows.
For distinct $i,j$, define complete bipartite quartet system ${\cal Q}_{ij}$ by:
for all distinct $a,a' \in A_i$ and $b,b' \in A_j$ it holds
\begin{align*}
ab || a'b' \in {\cal Q}_{ij} \quad &\mbox{if}\ \delta_{ij}(a, b) + \delta_{ij}(a', b') < \delta_{ij}(a, b') + \delta_{ij}(a', b),\\
ab' || a'b \in {\cal Q}_{ij} \quad &\mbox{if}\  \delta_{ij}(a, b) + \delta_{ij}(a', b') > \delta_{ij}(a, b') + \delta_{ij}(a', b),\\
aa' | bb' \in {\cal Q}_{ij}   \quad  &\mbox{if}\  \delta_{ij}(a, b) + \delta_{ij}(a', b') = \delta_{ij}(a, b') + \delta_{ij}(a', b).
\end{align*}
Then ${\cal Q} := \bigcup_{1 \leq i < j \leq r} {\cal Q}_{ij}$ 
is a complete ${\cal A}$-partite quartet system. 

This construction of complete ${\cal A}$-partite quartet system ${\cal Q}$ 
is justified as follows.
Assume a phylogenetic tree $T$ on $[n]$ with path-metric $\delta$.
Assume further that each $\delta_{ij}$ is linear on $\delta$, i.e., 
$\delta_{ij}$ is equal to $\alpha_{ij} \delta$ 
for some unknown constant $\alpha_{ij} > 0$.
By (4pt), the situation $\delta_{ij}(a, b) + \delta_{ij}(a', b') < \delta_{ij}(a, b') + \delta_{ij}(a', b)$
implies $\delta(a,b) + \delta(a',b') < \delta(a, b') + \delta(a', b) = \delta(a,a') + \delta(a,b')$, 
and implies that $T$ displays $ab||a'b'$. 
The situation  $\delta_{ij}(a, b) + \delta_{ij}(a', b') = \delta_{ij}(a, b') + \delta_{ij}(a', b)$
implies $\delta(a,b) + \delta(a',b') = \delta(a, b') + \delta(a', b) \geq \delta(a,a') + \delta(a,b')$, and implies that $T$ displays $aa' | bb'$.
Thus, by our algorithm, we can construct 
a phylogenetic tree $T'$ ``similar'' to $T$ in the sense that 
$T'$ and $T$ produce the same result under 
our limited measurement.

Suppose now that we have additional $r$ distance functions $\delta_{i} : A_i \times A_i \rightarrow \mathbf{R}_+$ for $i \in [r]$.
In this case, we naturally obtain a full $\mathcal{A}$-partite quartet system.
Indeed, define full quartet system ${\cal Q}_{i}$ on $A_i$
according to $\delta_{i}$ as in the first paragraph.
Then  $\mathcal{Q} := \bigcup_{1 \leq i < j \leq r} {\cal Q}_{ij} \cup \bigcup_{1 \leq i \leq r} \mathcal{Q}_i$
is a full $\mathcal{A}$-partite quartet system 
to which our algorithm is applicable.

	\section*{Acknowledgments}
	We thank Kunihiko Sadakane for bibliographical information.
	The first author is supported by JSPS KAKENHI Grant Numbers JP26280004, JP17K00029.
	The second author is supported by JSPS KAKENHI Grant Numbers JP16J04545, JP17K00029, JP19J01302, 20K23323, 20H05795, Japan.
	
	This version of the article has been accepted for publication, after peer review (when applicable) but is not the Version of Record and does not reflect post-acceptance improvements, or any corrections. The Version of Record is available online at: http://dx.doi.org/10.1007/s00453-022-00945-9.
	Use of this Accepted Version is subject to the publisher’s Accepted Manuscript terms of use https://www.springernature.com/gp/open-research/policies/accepted-manuscript-terms.

\end{document}